%
%
%


\documentclass[reqno, 11pt]{jams-l}

\usepackage{amssymb}
\usepackage{amsthm}
\usepackage[colorlinks,citecolor=red,pagebackref,hypertexnames=false]{hyperref}
\usepackage{color}
\usepackage{esint}
\usepackage{bbm}
\usepackage{graphicx}
\usepackage{mathrsfs}
\usepackage[left=3.3cm, right=3.3cm, top=2.7cm, bottom=2.7cm]{geometry}
\usepackage{enumitem}
\usepackage[hang,flushmargin]{footmisc}
\usepackage{comment}

\def\dA{{\mathcal{A}}}
\def\dB{{\mathcal{B}}}

\def\dF{{\mathcal{F}}}
\def\dG{{\mathcal{G}}}
\def\dN{{\mathcal{N}}}
\def\dS{{\mathcal{S}}}

\def\bZ{{\mathbb{Z}}}

\def\cC{{\mathscr{C}}}


\def\ve{\epsilon} 

\renewcommand{\d}{{\partial}}
\def\lec{\lesssim}

\DeclareMathOperator{\diam}{diam}
\def\dim{\mathop\mathrm{dim}} 					
\def\dist{\mathop\mathrm{dist}} 						

\newcommand{\ps}[1]{\left( #1 \right)}

\newcommand{\ck}[1]{\left\{#1 \right\}}

\def\XXint#1#2#3{{\setbox0=\hbox{$#1{#2#3}{\int}$ }
\vcenter{\hbox{$#2#3$ }}\kern-.58\wd0}}

%


\newtheorem{theorem}{Theorem}[section]
\newtheorem{lemma}[theorem]{Lemma}

\theoremstyle{definition}
\newtheorem{definition}[theorem]{Definition}

\theoremstyle{remark}
\newtheorem{remark}[theorem]{Remark}

\numberwithin{equation}{section}

\newcommand{\R}{\mathbb{R}}
\newcommand{\N}{\mathbb{N}}
\newcommand{\Z}{\mathbb{Z}}

\newcommand{\hd}{\mathcal{H}^d}

\newcommand{\hdc}{\mathcal{H}^d_\infty}

\newcommand{\B}{\mathbb{B}}

\newcommand{\spt}{\mathrm{spt}}



\newcommand\blfootnote[1]{%
  \begingroup
  \renewcommand\thefootnote{}\footnote{#1}%
  \addtocounter{footnote}{-1}%
  \endgroup
}

\newcommand{\dH}{\mathcal{H}}

\numberwithin{equation}{section}
\theoremstyle{plain}

\newtheorem{proposition}[theorem]{Proposition}

\newtheorem{claim}[theorem]{Claim}


\newcommand{\dD}{\mathcal{D}}

\newcommand{\cc}{\mathsf{c}}

\newcommand{\aA}{\mathsf{A}}

\newcommand{\Tree}{\mathsf{Tree}}
\newcommand{\Stop}{\mathsf{Stop}}
\newcommand{\Top}{\mathsf{Top}}

\newcommand{\Ss}{\mathsf{S}}


\newcommand{\doi}[1]{\textsc{doi}: \href{http://dx.doi.org/#1}{\nolinkurl{#1}}}

\makeatletter
\def\@makefnmark{%
  \leavevmode
  \raise.9ex\hbox{\fontsize\sf@size\z@\normalfont\tiny\@thefnmark}}
\makeatother

\newcommand{\RomanNumeralCaps}[1]
    {\MakeUppercase{\romannumeral #1}}

\title[Hausdorff dimension of non flat sets with PBP]{A note on the Hausdorff  dimension of uniformly non flat sets with plenty of big projections}


\author{Michele Villa}
\address{Research Unit of Mathematical Sciences, University of Oulu. P.O. Box 8000, FI-90014, University of Oulu, Finland.}
\email{michele.villa "at" oulu.fi}

\date{}

\dedicatory{}

\setcounter{tocdepth}{1}
\begin{document}
\maketitle

\begin{center}
\begin{minipage}[c][][r]{400pt}
\begin{small}
\textsc{Abstract.} 
Using a recent result of Orponen \cite{orponen2021plenty}, we show that sets with plenty of big projections (PBP) admit an Analyst's Travelling Salesman Theorem. We then show that sets with PBP which are uniformly non-flat (or wiggly) have large Hausdorff dimension. We also obtain a corollary on analytic and harmonic Lipschitz capacities. 
\end{small}
\end{minipage}
\end{center}  
\blfootnote{$3^{rd}$ of November 2021\\\textup{2010} \textit{Mathematics Subject Classification}: \textup{28A75}, \textup{28A12} \textup{28A78}.\\
\textit{Key words and phrases.} Rectifiability, Travelling salesman theorem, beta numbers, Hausdorff dimension, Hausdorff content, analytic capacity, Lipschitz harmonic capacity.\\
 M. V. was supported by the Academy of Finland via the project Incidences on Fractals, grant No. 321896.
}

\tableofcontents

\section{Introduction}
In this note we record some consequences of a recent result by T. Orponen \cite{orponen2021plenty}, which states that an Ahlfors regular set with \textit{plenty of big projections} (PBP) has \textit{big pieces of Lipschitz graph} (BPLG). Let us give some definitions. Given an integer $1 \leq d\leq n$, a set $E \subset \R^n$ is called Ahlfors $d$-regular if there exists a constant $C \geq 1$ so that
\begin{align*}
	C^{-1} r(B)^d \leq \dH^d(B\cap E) \leq C r(B)^d,
\end{align*}
for each ball $B$ centered on $E$ with $r(B) \leq \diam(E)$. Here $r(B)$ denotes the radius of $B$. A set $E$ is said to have plenty of big $d$-dimensional projections ($d$-PBP) if there exist $\delta, \ve> 0$ so that the following hold. For each ball $B$ centered on $E$, there exists a $d$-plane $V_B \in \dG(n,d)$ so that
\begin{align}\label{e:PBP}
    \hd(\pi_V(E\cap B)) \geq \delta r(B)^d \mbox{ for all } V \in B(V_B, \ve).
\end{align}
Here $\pi_V$ is the orthogonal projection $\R^n \to V$, $\dG(d,n)$ is the Grassmannian manifold of $d$-dimensional (linear) planes in $\R^n$, and the ball $B(V_0, \ve)$ is defined with respect to the standard metric on $\dG(n,d)$.  Finally, a set $E$ has big pieces of Lipschitz graph if there are positive constants $\theta, L$ so that for every ball $B$ centered on $E$ with $r(B)\leq \diam(E)$, there exists an $L$-Lipschitz graph $\Gamma$ so that 
\begin{align}\label{e:BPLG}
	\dH^d(B \cap E \cap \Gamma) \geq \theta r(B)^d. 
\end{align}
Orponen's work proves a conjecture by David and Semmes from 1993 \cite{david1993quantitative} (see also \cite{martikainen2018characterising}). This conjecture was formulated in the context of \textit{quantitative rectifiability}, see \cite{david-semmes91, david-semmes93}. While the theory was originally developed for Ahlfors regular sets, in recent years several works have extended it to more general settings.  This extension has taken two forms, distinguished by flavour and aims.  A first thread of papers\footnote{For a more exaustive overview on the literature, see \cite{villa2019higher}.} (\cite{jones90, schul2007, badger2015multiscale, azzam2018analyst, villa2019higher, azzam2019quantitative, hyde2020TST}) focused on the geometry of sets and the generalisations have most often been stated in terms of \textit{Analyst's Travelling Salesman theorems} (TST). A second thread (\cite{girela2018riesz, dkabrowski2021measures, tolsa2021measures}) deals mostly with Radon measures and have kept its interest on the harmonic analytic side. This note belongs to the first thread - \textit{we prove an Analyst's TST for sets with PBP}. But because the precise statement entails introducing some technical tools, we first give two interesting corollaries.

\subsection{Applications: dimension of uniformly non flat sets}
\begin{theorem}\label{t:corollary}
Let $n \geq 2$ and $n-1 \geq d \geq 1$. Let $E \subset \R^n$ be a closed set with $d$-PBP \textup{(}with parameters $\ve,\delta>0$\textup{)} and which is uniformly non $d$-flat \textup{(}with constant $\beta_0$\textup{)}. Then 
\begin{equation*}
    {\rm dim}_H(E) \geq d + c \gamma(\ve, \delta) \beta_0^2,
\end{equation*}
where $c$ depends on $n,d$ and $\gamma(\ve, \delta)\to 0$ as $\ve \to 0$ or $\delta \to 0$.
\end{theorem}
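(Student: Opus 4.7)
The plan is to combine the Analyst's Traveling Salesman Theorem for PBP sets (which this note establishes using Orponen's theorem from \cite{orponen2021plenty}) with the uniform non-flatness hypothesis, following the Azzam--Schul--David mechanism that turns per-scale $\beta$-gains into a dimension gain.

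First, apply the TST to every ball $B$ centered on $E$ with $r(B)\leq \diam(E)$. I expect an estimate of schematic form
\[
\sum_{Q \in \mathcal{D}(B)} \beta_{E}(Q)^{2}\,\ell(Q)^{d} \;\lec\; \gamma(\varepsilon,\delta)^{-1} \bigl(\mathcal{H}^{d}_{\infty}(E \cap B) + r(B)^{d}\bigr),
\]
where $\mathcal{D}(B)$ is a Christ--David-type system of dyadic cubes for $E\cap B$ and $\gamma(\varepsilon,\delta)$ quantifies the strength of PBP. The wiggliness hypothesis $\beta_E(Q)\geq \beta_{0}$ then forces the left-hand side to be at least $\beta_{0}^{2}\sum_{Q} \ell(Q)^{d}$.

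Second, PBP provides the counting input needed to convert this into a dimension lift. Since $\pi_{V}$ is $1$-Lipschitz and $\mathcal{H}^{d}(\pi_{V}(E\cap B))\geq \delta r(B)^{d}$, the set $E$ satisfies a lower $d$-content estimate at every ball; thus at each dyadic scale $2^{-k}r(B)$ the number of cubes of $\mathcal{D}(B)$ that meet $E$ substantially is $\gec \delta\cdot 2^{kd}$. Running the TST against a Frostman measure $\mu$ of dimension $s$ on $E$ in place of pure counting, the combination of wiggliness and the TST yields a factor $(1+c\gamma(\varepsilon,\delta)\beta_{0}^{2})$ gain at each scale: each parent cube must distribute its $\mu$-mass over at least $2^{d}(1+c\gamma(\varepsilon,\delta)\beta_{0}^{2})$ nontrivial children, because wiggliness forces a positive share of the mass to sit off the best $d$-plane. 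Compounding over $k\sim \log(1/r)$ scales produces $\mu(B(x,r))\lec r^{d+c\gamma(\varepsilon,\delta)\beta_{0}^{2}}$, and the mass distribution principle yields $\dim_{H}(E)\geq d + c\gamma(\varepsilon,\delta)\beta_{0}^{2}$.

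The main obstacle I anticipate is the second step: converting the additive per-scale gain $\beta_{0}^{2}$ from the TST into a multiplicative dimension increment requires a careful stopping-time argument. One must absorb the factor $\gamma(\varepsilon,\delta)^{-1}$ from the TST (which blows up as $\varepsilon$ or $\delta$ tends to $0$) into the resulting exponent, and ensure the counting via PBP does not leak through exceptional cubes where the $\beta$-bound or the projection bound could degenerate. This bookkeeping is precisely what forces the final bump in the exponent to be $c\gamma(\varepsilon,\delta)\beta_{0}^{2}$ rather than the cleaner $c\beta_{0}^{2}$ that one would obtain in the Ahlfors regular setting.
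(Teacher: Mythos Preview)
Your first step is right and matches the paper: the TST for PBP sets gives
\[
\sum_{Q\subset Q_0}\beta_E^{d,2}(Q)^2\ell(Q)^d \lesssim \gamma(\varepsilon,\delta)^{-1}\mathcal{H}^d(Q_0),
\]
and uniform non-flatness bounds the left-hand side below. The problem is your second step.

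First, the Frostman measure $\mu$ ``of dimension $s$'' is circular: the existence of such a measure with $s>d$ is exactly the conclusion you want. In the paper (following Bishop--Jones), the Frostman measure is \emph{constructed} at the end, from a nested family of cubes whose cardinality you have already controlled; it is not an input.

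Second, and more seriously, the heuristic ``each parent distributes its mass over at least $2^d(1+c\gamma\beta_0^2)$ nontrivial children'' is not what the TST yields. The TST is a \emph{cumulative} bound across many scales, not a per-generation multiplicative one. What you actually get by combining non-flatness with the TST (applied to a discrete skeleton approximation $E_{R,k}$) is an \emph{additive} gain: if $I_N$ is a cube at scale $2^{-N}$ meeting $E$, then the number of scale-$2^{-k}$ cubes inside $I_N$ meeting $E$ is
\[
\gtrsim \gamma(\varepsilon,\delta)\,(k-N)\,\beta_0^2\,2^{d(k-N)},
\]
i.e.\ a factor linear in the number of generations $k-N$, not exponential. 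The Bishop--Jones trick is then to \emph{choose} the block length $\kappa:=k-N\approx (\gamma\beta_0^2)^{-1}$ so that $\gamma\kappa\beta_0^2\geq 2^{c\gamma\beta_0^2\kappa}$, turning the linear factor into the desired exponential bump $2^{(d+c\gamma\beta_0^2)\kappa}$. One then iterates this $\kappa$-block inside each resulting cube to build a Cantor-type family $\{\mathcal{S}_j\}$, and only \emph{then} defines the Frostman measure by distributing mass uniformly over $\mathcal{S}_j$ and applies the mass distribution principle.

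So the missing idea is precisely this ``linear gain over a well-chosen block of scales, then iterate the block'' step; without it, your per-scale multiplicative picture does not follow from the TST, and the argument as written does not close.
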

\noindent
A  set $E$ is \textit{uniformly  non flat} of dimension $d$ and with parameter $\beta_0$ if for all balls $B$ centered on E it holds that
\begin{align}\label{e:beta-jones}
 \beta_{E}^{d,1}(B) > \beta_0.
 \end{align}
Here $\beta_E^{d,1}$ is an averaged version of the well-known Jones' coefficients (see \cite{jones90}). See Definitions \ref{d:jones-beta} and \ref{d:content-beta}.
 Uniformly non flat sets, also known as uniformly wiggly sets, were first studied by Bishop and Jones in \cite{bishop1997wiggly}, were they proved Theorem \ref{t:corollary} with $E$ a connected set in the plane, and $d=1$. Their motivation was the study of the Hausdorff dimension of  limit sets of certain groups. A result in this vein was later proved by David \cite{david2004hausdorff} (and quantified in \cite{villa2019higher}), this time for uniformly non flat sets of \textit{any integer} dimension, satisfying a topological condition. David's work was motivated by a question of L. Potyagailo, concerning higher dimensional limit sets (see the introduction of \cite{david2004hausdorff}). A motivation for publishing this note is that the PBP condition is in principle easier to check than David's topological one, hence we expect the result to be more applicable, at least in this context.
 
 \begin{remark}[On the definition of non-flatness]
 Uniform non-flatness could be defined in terms of $\beta_E^{d,p}$, for $p>1$ or $\beta_E^{d, \infty}$. In the first case one would obtain the same result as in Theorem \ref{t:corollary}, as long as $1 \leq p < p(d)$, where $p(d)$ is defined in Theorem \ref{t:main} - the reason for this is simply Theorem \ref{t:main} holds in this range, and it is a major tool in the proof of the dimension estimate \eqref{e:beta-jones}. On the other hand, if we define uniform non-flatness with respect to $\beta_E^{d,\infty}$ we obtain that 
 \begin{align}\label{e:dim-bound2}
 	{\rm dim}_H(E) \geq d+ c \gamma(\ve, \delta)\beta_0^{2(d+1)}
 \end{align}
for $d>1$, since for such $d$'s Theorem \ref{t:main} doesn't hold with $\beta_E^{d,\infty}$. In the case $d=1$, we still have the bound ${\rm dim}_H(E)>1 + c \gamma(\ve, \delta) \beta_0^2$, which is better than \eqref{e:dim-bound2}.
 \end{remark}

\begin{remark}[On the constant $\gamma(\delta,\ve)$]
	In a first draft of this note, the Theorem \ref{t:corollary} was wrongly stated: the dimension estimated did not depend on the PBP parameters, as the constant $\gamma(\ve, \delta)$ had been overlooked by the author. The dependence of $\dim(E)$ on the PBP parameters is not a proof artifact: consider a four corners Cantor set $E$, constructed in the usual way, except that we dilate by a small constant $\eta$ the squares in the construction. In the limit, we will obtain that a) $\dim_{H}(E) = 1 + C(\eta)$ with $C(\eta)$ as $\eta\to 1$, b) $\beta_E^{1,\infty}(B)\gtrsim 1$ for any ball $B$ centered on $E$, and c) $E$ will have PBP with parameters depending on $\eta$. This example contradicts the old (wrong) version of Theorem \ref{t:main}, where $\gamma(\ve, \delta)$ didn't appear. I thank T. Orponen for pointing out this issue and providing the example above.
\end{remark}

\subsection{Applications: analytic and Lipschitz harmonic capacities}
Below, if $d=1$, `Lipschitz harmonic' should be read `analytic'.  
For $d \geq 1$, let $E\subset \R^{d+1}$. We say that $E$ is \textit{removable for Lipschitz harmonic (LH) functions} if for any open set $\Omega \supset E$, any function $f:\Omega \to \R$ which is Lipschitz in $\Omega$ and harmonic in $\Omega\setminus E$ can be extended to the whole of $\Omega$. To understand what sort of sets are removable, the Lipschitz harmonic capacity was introduced:
\begin{equation*}
	\kappa(E):= \sup |\langle \Delta f, 1 \rangle|, 
\end{equation*}
where the supremum is taken over all Lipschitz functions on $\R^{d+1}$ which are harmonic in $\R^{d+1} \setminus \R$ and  satisfy $\|\nabla f\|_\infty \leq 1$, and where $\Delta f$ is understood in the sense of distribution. It is true that $E$ is removable if and only if $\kappa(E)=0$. Using \cite[Corllary 1.5]{tolsa2021measures} and Theorem \ref{t:main} we obtain the following quantitative bound for subsets of sets with PBP.
\begin{theorem}\label{t:corollary-2}
	Let $\Sigma\subset \R^{d+1}$ be a compact set with $d$-PBP with parameters $\delta, \ve>0$ and $\hd(E)<+\infty$. Then for any compact subset $E \subset \Sigma$, 
	\begin{align*}
			\kappa(E) \geq C \gamma(\ve, \delta)^{\frac{1}{2}} \frac{\hdc(E)^{\frac{3}{2}}}{\hd(\Sigma)^{\frac{1}{2}}},
	\end{align*}
where $C$ only depends on $d$ and $\gamma(\ve, \delta)$ is as in Corollary \ref{t:corollary}.  
\end{theorem}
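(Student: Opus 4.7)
The plan is to combine the Analyst's Travelling Salesman Theorem for PBP sets (Theorem \ref{t:main}) with the Lipschitz-harmonic capacity lower bound in Tolsa's \cite[Corollary 1.5]{tolsa2021measures}. The first ingredient controls the total square Jones function of $\Sigma$ by $\gamma(\ve,\delta)^{-1}\hd(\Sigma)$, while the second converts such an estimate into a pointwise lower bound on $\kappa(E)$ for compact subsets $E\subset \Sigma$.

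Concretely, I would first apply Theorem \ref{t:main} to $\Sigma$. Using a system $\dD_\Sigma$ of David--Mattila cubes on $\Sigma$ and letting $B_Q$ denote the ball associated to $Q$, this gives
\[
  \jones(\Sigma)\;:=\;\sum_{Q\in \dD_\Sigma}\beta_\Sigma^{d,p}(2B_Q)^2\,\ell(Q)^d\;\lesssim\;\frac{1}{\gamma(\ve,\delta)}\,\hd(\Sigma)
\]
for an admissible exponent $p<p(d)$, after absorbing the $\diam(\Sigma)^d$ contribution into the right hand side (which is legitimate because $d$-PBP together with $\hd(\Sigma)<\infty$ forces the relevant upper-$d$-regularity-type bound). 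Next, I would set $\mu:=\hd|_\Sigma$, check that $\mu$ has polynomial $d$-growth at the scales involved, and apply \cite[Corollary 1.5]{tolsa2021measures} to the pair $(E,\mu)$. That corollary yields an estimate of the form
\[
  \kappa(E)^2\;\gtrsim\;\frac{\hdc(E)^3}{\mu(\Sigma)\;+\;\jones_\mu(\Sigma)},
\]
where $\jones_\mu$ is the measure-theoretic Jones square function associated to $\mu$. Plugging in $\mu(\Sigma)=\hd(\Sigma)$ and the bound from Theorem \ref{t:main}, the denominator is controlled by $\gamma(\ve,\delta)^{-1}\hd(\Sigma)$, and taking square roots gives exactly $\kappa(E)\gtrsim \gamma(\ve,\delta)^{1/2}\,\hdc(E)^{3/2}/\hd(\Sigma)^{1/2}$.

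The main obstacle I anticipate is reconciling the technical set-ups of the two inputs. First, Theorem \ref{t:main} is phrased for set-theoretic $\beta_\Sigma^{d,p}$ numbers with $p<p(d)$, whereas Tolsa's corollary uses the measure-theoretic $\beta_\mu^{d,2}$. To pass between them one either verifies that $p=2$ is admissible in the range $p<p(d)$ (true for $d=1$ by Schul) or argues via H\"older, using polynomial growth of $\mu$, that the two Jones functions are comparable up to a loss of constants depending only on $d$. Second, one must extract polynomial $d$-growth of $\mu=\hd|_\Sigma$ from the hypotheses, which does not follow automatically from $\hd(\Sigma)<\infty$; this should be obtainable from a covering argument exploiting PBP at each scale, or by first replacing $\Sigma$ with a slightly smaller good subset on which polynomial growth holds with only a controlled loss in $\hd$-measure.
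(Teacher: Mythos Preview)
Your overall strategy---combine the TST for PBP sets with Tolsa's capacity lower bound---is exactly what the paper does. The gap is in your choice of measure. You set $\mu=\hd|_\Sigma$ and then need polynomial $d$-growth, which, as you yourself flag, does not follow from $d$-PBP together with $\hd(\Sigma)<\infty$. PBP yields \emph{lower} content regularity (Lemma~\ref{l:low-reg-E}) but says nothing about upper regularity, and neither of your proposed fixes works: a covering argument using PBP cannot manufacture an upper density bound, and passing to a good subset $\Sigma'\subset\Sigma$ with polynomial growth need not preserve the inclusion $E\subset\Sigma'$, so you lose control of $\kappa(E)$.

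The paper sidesteps this entirely by choosing the measure differently. Instead of $\hd|_\Sigma$, it takes a Frostman measure $\mu$ supported on $E$ with $\mu(B)\le C\,r(B)^d$ and $\mu(E)\approx\hdc(E)$; polynomial growth is then built in. Since $\spt(\mu)\subset E\subset\Sigma$ and the Frostman property gives $\mu(A)\lesssim\hdc(A)$ for every Borel $A$, one has $\beta_\mu^{d,2}(B)\lesssim\beta_\Sigma^{d,2}(B)$ pointwise, so the TST bound on the content $\beta$-numbers of $\Sigma$ transfers directly to the square function $\beta^2(\mu,B_0)\lesssim\gamma(\ve,\delta)^{-1}\hd(\Sigma)$. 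One then rescales $\sigma=C_2\mu$ with $C_2\approx\bigl(\hdc(E)\big/(\gamma(\ve,\delta)^{-1}\hd(\Sigma))\bigr)^{1/2}$, so that $\sigma$ simultaneously has $d$-growth and $\beta^2(\sigma,B_0)\lesssim\sigma(E)$, and applies Tolsa's result in the form $\kappa(E)\gtrsim\sigma(E)$. The arithmetic yields the stated inequality. Your side worry about the exponent is not an issue: $2<2d/(d-2)$ for every $d\ge1$, so $p=2$ is always admissible in Theorem~\ref{t:main}.
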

When $d=1$ and $\Sigma$ is a rectifiable graph, Corollary \ref{t:corollary-2} was shown by Murai in \cite{murai1987comparison}. It was Verdera who noted that the result can be obtained via Menger curvature and Jones' travelling salesman theorem, see \cite[Theorem 4.31]{tolsa-book}. To the author knowledge, the results is new for Lipschitz harmonic capacity. See however fundamental previous related works \cite{paramonov1990harmonic}, \cite{mattila1995geometric} and \cite{volberg2003calderon}. See the books \cite{tolsa-book} and \cite{dudziak2011vitushkin} on the subject. The proof of Corollary \ref{t:corollary-2} follows Verdera's, see Section \ref{s:cap}.

The theorem below is the Analyst's TST which gives the two applications above. 
\begin{theorem}\label{t:main}
Let $E\subset \R^n$ be a set with $d$-PBP with constants $\ve, \delta>0$, $\dD$ be a system of Christ-David cubes, $Q_0 \in \dD$ and $C_0$ a sufficiently large constant. Let $1 \leq p < \tfrac{2d}{d-2}$ (and $1 \leq p \leq \infty$ if $d=1$).
 Then
\begin{align}\label{e:main}
    {\rm diameter}(Q_0)^d + \beta(Q_0) \approx \hd(Q_0),
\end{align}
where the constant behind the symbol $\approx$ depends on the constant $\ve, \delta$, $\aA$, $p$, $n,d$ and on the constants from the Azzam-Schul TST (see Theorem A.1 in \cite{azzam2019quantitative}).
\end{theorem}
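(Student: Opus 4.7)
The plan is to derive Theorem \ref{t:main} by combining Orponen's theorem \cite{orponen2021plenty} with the Azzam--Schul TST framework (Theorem A.1 of \cite{azzam2019quantitative}), proving the two sides of $\approx$ separately.

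For the direction $\hd(Q_0) \gtrsim \diam(Q_0)^d + \beta(Q_0)$, I would not need Orponen's theorem at all. Since projections are $1$-Lipschitz, PBP yields $\hd(E\cap B_{Q_0}) \ge \hd(\pi_{V_{Q_0}}(E\cap B_{Q_0})) \ge \delta\, r(B_{Q_0})^d$, which gives $\diam(Q_0)^d \lesssim \hd(Q_0)$. The estimate $\beta(Q_0) \lesssim \hd(Q_0)$ is then precisely the content of Theorem A.1 of \cite{azzam2019quantitative}, which bounds the Carleson $\beta^{d,p}$-sum by the $d$-dimensional Hausdorff measure of the set in the stated range of $p$, and it requires no further hypotheses.

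For the reverse inequality $\hd(Q_0) \lesssim \diam(Q_0)^d + \beta(Q_0)$, Orponen's theorem is the key new input. First I would apply \cite{orponen2021plenty} to deduce that $E$ (through its Christ--David cube structure) has big pieces of $L$-Lipschitz graphs, with $\theta, L$ depending only on $\ve, \delta, n, d$. Then I would build a corona decomposition of $\dD|_{Q_0}$: starting with $Q_0$ as the top of a tree, descend through the hierarchy and stop a cube $Q$ whenever either the local $\beta^{d,p}(C_0 B_Q)^2\ell(Q)^d$-contribution below $Q$ has already exceeded a fixed threshold, or $E$ near $Q$ ceases to be well approximated by the Lipschitz graph produced by BPLG at the top. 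Stopping cubes serve as roots of new trees, and the procedure recurses. Within each tree the piece of $E$ captured has $\hd$-measure comparable to that of the corresponding $L$-Lipschitz graph piece, and the latter is controlled by its diameter to the $d$-th power plus the tree's $\beta^2$-sum via the classical TST on Lipschitz graphs. Summing over all trees would then yield the claimed bound.

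The main obstacle I expect is executing the corona construction cleanly in the absence of an a priori upper Ahlfors regularity assumption on $E$. PBP already supplies the lower $d$-regularity bound, and Orponen gives enough uniform geometric structure on each tree that the recursion should close up, but keeping the BPLG-constants uniform throughout the recursion and checking that the stopping contributions sum to at most $\beta(Q_0)$ (with a geometric series coming from the fact that BPLG removes a definite $\theta$-fraction of measure at each level) are the delicate steps. I would handle these by adapting the corona machinery of \cite{azzam2019quantitative} and \cite{hyde2020TST}, feeding in Orponen's theorem as the new geometric input at the top of each tree.
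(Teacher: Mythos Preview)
You have the two directions of \eqref{e:main} reversed, and this is a genuine error, not a matter of notation. The Azzam--Schul theorem (the main result of \cite{azzam2018analyst}, restated as Theorem A.1 in \cite{azzam2019quantitative}) gives the inequality
\[
\hd(Q_0)\ \lesssim\ \ell(Q_0)^d + \sum_{Q\subset Q_0}\beta_E^{d,p}(3B_Q)^2\ell(Q)^d,
\]
valid for any lower content $d$-regular set. It does \emph{not} bound the $\beta$-sum by $\hd(Q_0)$; that direction is false in general (think of a purely unrectifiable $1$-set, where the $\beta$-sum diverges while $\hd$ is finite). So the inequality you call ``easy'' and claim ``requires no further hypotheses'' is precisely the one that needs PBP and Orponen's theorem, and the one you propose to prove via a BPLG corona is the one that comes for free from Azzam--Schul.

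Once the directions are straightened out, there is a second gap in your plan for the hard direction $\beta(Q_0)\lesssim \hd(Q_0)$: you cannot apply \cite{orponen2021plenty} directly to $E$, because Orponen's theorem assumes Ahlfors $d$-regularity, and PBP only gives \emph{lower} content regularity. The paper handles this by first invoking the corona/discretisation lemma of \cite{azzam2019quantitative} (Lemma~\ref{l:corona} here), which approximates $E$ within each stopping-time region $\Tree(R)$ by an honestly Ahlfors $d$-regular set $E_R$ built from dyadic $d$-skeleta. One then checks (Lemma~\ref{l:approx-BPMD}) that $E_R$ inherits PBP from $E$, so Orponen applies to $E_R$, giving BPLG and hence the strong geometric lemma for $E_R$. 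The $\beta$-sum for $E$ over $\Tree(R)$ is split via Lemma~\ref{lemma:azzamschul} into the $\beta$-sum for $E_R$ (controlled by $\ell(R)^d$) plus an error measuring $\dist(\cdot,E_R)$, which is handled by the Whitney-like property \eqref{e:whitney-like}. Summing over $R\in\Top$ and using the packing estimate \eqref{e:ADR-packing} closes the argument. Your sketch gestures at a corona but builds it in the wrong direction and at the wrong layer: the corona is used to \emph{manufacture} Ahlfors regularity so Orponen can be applied, not to decompose a BPLG set that you do not yet have.
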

\noindent
Here
\begin{align}\label{e:BETA}
    \beta(Q_0) = \beta_{E, \aA, p, d} (Q_0): = \sum_{Q \in \dD(Q_0)} \beta_E^{d,p} (C_0 B_Q)^2 \ell(Q)^d
\end{align}
and the coefficients $\beta_E^{d,p}$ are a variant of Jones' $\beta$ numbers \eqref{e:beta-jones} introduced by Azzam and Schul in \cite{azzam2018analyst}. We will define these coefficients and the Christ-David cubes in the preliminaries section below.  We refer the reader to Sections 1 and 3 of \cite{villa2019higher} for an overview on Analyst's TST and their relevance. 

\begin{remark}
	What we really prove here is one direction of the inequality (bound on the $\beta$ sum with the measure). See Proposition \ref{l:main-lemma}, where also the constant $\gamma(\ve, \delta)$ in Corollaries \ref{t:corollary} and \ref{t:corollary-2} appears.
\end{remark}

\subsection*{Acknowledgments}
I thank D. Dabrowski, M. Hyde, T. Orponen and X. Tolsa for useful comments and discussions on the manuscript and/or on related topics. In particular, T. Orponen pointed out that a previous version of Corollary \ref{t:corollary} was incorrectly stated, and suggested some counterexamples. The application in Corollary \ref{t:corollary-2} was suggested in discussions with Dabrwoski and Tolsa at the conference `Rajchman, Zygmund, Marcinkiewicz' at IM PAN. I also thank the organisers for the kind hospitality. 

\section{Preliminaries}
\subsection{Notation} \label{sec:notation}
We gather here some notation and some results which will be used later on.
We write $a \lesssim b$ if there exists a constant $C$ such that $a \leq Cb$. By $a \sim b$ we mean $a \lesssim b \lesssim a$.
In general, we will use $n\in \N$ to denote the dimension of the ambient space $\R^n$, while we will use $d \in \N$, with $d\leq n-1$, to denote the dimension of a subset $E \subset \R^n$.

For two subsets $A,B \subset \R^n$, we let
$
    \dist(A,B) := \inf_{a\in A, b \in B} |a-b|.
$
For a point $x \in \R^n$ and a subset $A \subset \R^n$, 
$
    \dist(x, A):= \dist(\{x\}, A)= \inf_{a\in A} |x-a|.
$
We write 
$
    B(x, r) := \{y \in \R^n \, |\,|x-y|<r\},
$
and, for $\lambda >0$,
$
    \lambda B(x,r):= B(x, \lambda r).
$
At times, we may write $\B$ to denote $B(0,1)$. When necessary we write $B_n(x,r)$ to distinguish a ball in $\R^n$ from one in $\R^d$, which we may denote by $B_d(x, r)$. 
We denote by $\dG(n,d)$ the Grassmannian, that is, the manifold of all $d$-dimensional linear subspaces of $\R^n$. A ball in $\dG(n,d)$ is defined with respect to the standard metric
\begin{align*}
	d_{\dG}(V, W) = \|\pi_V - \pi_W\|_{{\rm op}}.
\end{align*}
Recall that $\pi_V: \R^n \to V$ is the standard orthogonal projection onto $V$.
With $\dA(n,d)$ we denote the affine Grassmannian, the manifold of all affine $d$-planes in $\R^n$.

\subsection{Christ-David and dyadic cubes}
The family of dyadic cubes in $\R^n$ will be denoted by $\Delta$, the family of dyadic cubes with sidelength $\ell(I)=2^{-k}$ by $\Delta_k$.
\begin{theorem}[\cite{christ1990b, david-wavelets, hytonen2012non}] \label{theorem:christ}
Let $X$ be a doubling metric space and $X_k$ be a sequence of maximal $\rho^k$-separated nets, where $\rho = 1/1000$ and let $c_0 = 1/500.$ Then, for each $k \in \Z$, there is a collection $\dD_k$ of cubes such that the following hold.
\begin{enumerate}
\item For each $k \in \Z, \ X = \bigcup_{Q \in \dD_k}Q.$
\item If $Q_1, Q_2 \in \dD = \bigcup_k \dD_k$ and $Q_1 \cap Q_2\neq \emptyset$, then $Q_1 \subset Q_2$ or $Q_2 \subset Q_1$.
\item For $Q \in \dD$, let $k(Q)$ be the unique integer so that $Q \in \dD_k$ and set $\ell(Q)=5 \rho^k$. Then there is $x_Q \in X_k$ such that 
\begin{align*}
B(x_Q,c_0\ell(Q)) \subseteq Q \subseteq B(x_Q , \ell(Q)).
\end{align*}
\end{enumerate}
\end{theorem}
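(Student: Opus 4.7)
The plan is to carry out the classical construction of M. Christ (later simplified by David and extended to general doubling metric spaces by Hyt\"onen--Martikainen, as cited). The idea is to build a genealogical tree on $\bigsqcup_k X_k$, attach a ``cube'' to each node, then verify the three listed properties.

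First, I would reduce to the case where the nets are nested: $X_k \subseteq X_{k+1}$ for all $k$. Having chosen $X_k$, one extends it via Zorn's lemma to a maximal $\rho^{k+1}$-separated set containing it, which is permissible because any $\rho^k$-separated set is a fortiori $\rho^{k+1}$-separated. Next I would define a parent map $\pi_k : X_{k+1} \to X_k$ by sending each $y \in X_{k+1}$ to a nearest point of $X_k$ (with a fixed tie-breaking rule, and setting $\pi_k(y) = y$ when $y \in X_k$). Maximality of $X_k$ forces $d(y, \pi_k(y)) < \rho^k$. Iterating these parent maps gives a canonical ancestor at every coarser scale, and therefore a tree structure on the disjoint union of the nets.

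For each $x \in X_k$ I would define the cube $Q_x^k$ by assigning $z \in X$ to $Q_x^k$ provided that for all sufficiently large $j$, a nearest point of $X_j$ to $z$ has $x$ as its scale-$k$ ancestor. After fixing a consistent tie-breaking rule for ``nearest point'' across scales, this produces a well-defined partition of $X$, giving property (1). Property (2) is then built into the definition: if two cubes at different scales share a point $z$, then for $j$ large enough the scale-$k$ ancestor of the $X_j$-nearest point of $z$ is uniquely determined, forcing one subtree (and hence one cube) to be contained in the other.

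Property (3) reduces to a geometric-series estimate. If $y \in X_{k+n}$ is a descendant of $x \in X_k$, iterating the bound $d(\cdot, \pi_k(\cdot)) < \rho^k$ gives
\[
d(y, x) \le \sum_{i=0}^{n-1} \rho^{k+i} < \frac{\rho^k}{1-\rho} < 2\rho^k,
\]
which with $\ell(Q) = 5\rho^k$ gives the outer inclusion $Q_x^k \subseteq B(x_Q, \ell(Q))$ with plenty of slack. For the inner inclusion $B(x_Q, c_0 \ell(Q)) \subseteq Q_x^k$ one uses $c_0 \ell(Q) = \rho^k / 100$: a point within $\rho^k/100$ of $x_Q$ is strictly closer to $x_Q$ than to any competing $x' \in X_k$ (these being $\rho^k$-separated), so the tie-breaking at scale $k$ assigns it to the subtree of $x_Q$. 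The main technical obstacle is precisely this last point: one must choose tie-breaking \emph{consistently} across all scales so that the inner containment and nesting hold simultaneously at every node; the doubling hypothesis ensures the branching of the tree is uniformly bounded, which keeps the ambiguity finite and lets one fix such a rule (e.g.\ by well-ordering each $X_k$ and always taking the earliest nearest point).
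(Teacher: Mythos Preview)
The paper does not give its own proof of this theorem; it is quoted as a black box from the cited references (Christ, David, Hyt\"onen--Martikainen) and used as a standing tool, so there is no in-paper argument to compare against.

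Your outline is the standard Christ construction and is correct in spirit. One step is stated too loosely, though: for the inner inclusion $B(x_Q,c_0\ell(Q))\subseteq Q_{x_Q}^k$, knowing that $z$ is closer to $x_Q$ than to any other point of $X_k$ does \emph{not} directly force the scale-$k$ ancestor of the $X_j$-nearest point $w_j$ of $z$ to be $x_Q$, because the accumulated drift $\sum_{i\ge k}\rho^i$ along the ancestry chain is already of order $\rho^k$, comparable to the separation in $X_k$. The clean fix (implicit in the references) is to look one generation up: the scale-$(k+1)$ ancestor $v$ of $w_j$ satisfies
\[
d(v,x_Q)\;\le\; d(v,w_j)+d(w_j,z)+d(z,x_Q)\;<\;\frac{\rho^{k+1}}{1-\rho}+\rho^{j}+\frac{\rho^k}{100}\;\ll\;\rho^k,
\]
so any competitor $x'\in X_k\setminus\{x_Q\}$ has $d(v,x')\ge \rho^k-d(v,x_Q)>d(v,x_Q)$, forcing $\pi_k(v)=x_Q$. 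With this adjustment your sketch goes through.
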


The $k^{th}$-\textit{generation children} of $Q \in \mathcal{D}$, denoted by $\text{Child}_k(Q),$ are the cubes $R \subseteq Q$ so that $\ell(R) = \rho^k\ell(Q).$ We also need the notion of a stopping-time region. 

\begin{definition}\label{d:ST}
A collection of cubes $S \subseteq \mathcal{D}$ is called a \textit{stopping-time region} if the following hold.
\begin{enumerate}
\item There is a cube $Q(S) \in S$ such that $Q(S)$ contains all cubes in $S$. 
\item If $Q \in S$ and $Q \subseteq R \subseteq Q(S),$ then $R \in S$.
\item If $Q \in S$, then all siblings of $Q$ are also in $S$. 
\end{enumerate}
\end{definition}

\subsection{Choquet integration and $\beta$-numbers}
For $1 \leq p<\infty$ and $A \subset \R^n$ Borel, we define the $p$-Choquet integral as 
\begin{align*}
    \int_A f(x)^p\, d \hdc(x) := \int_0^\infty \hdc(\{x \in A\, |\, f(x)>t\}) \, t^{p-1}\, dt.
\end{align*}
 We refer the reader to \cite{mattila} for more detail on Hausdorff measures and content and to Section 2 and the Appendix of \cite{azzam2018analyst} for more details on Choquet integration.  
 
\begin{lemma}\label{l:jensen}
Let $E \subseteq \R^n$ be either compact or bounded and open so that $\hd(E) >0,$ and let $f \geq 0$ be continuous on $E$. Then for $1 < p \leq \infty,$
\[ \frac{1}{\hd_\infty(E)} \int_E f \, d\hd_\infty \lesssim_n \left( \frac{1}{\hd_\infty(E)} \int_E f^p \, d\hd_\infty \right)^\frac{1}{p} \] 
\end{lemma}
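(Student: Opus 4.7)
The obstruction is that $\hdc$ is only finitely subadditive, not a genuine measure, so classical Jensen/H\"older does not apply directly. The standard workaround is a duality, essentially a form of Frostman's lemma for Hausdorff content, asserting that there is a dimensional constant $c_n$ with
\[
\int_E g \, d\hdc \leq c_n \sup_{\mu \in \mathcal{M}_d(E)} \int_E g \, d\mu
\]
for any nonnegative continuous $g$, where $\mathcal{M}_d(E)$ denotes the class of Borel measures $\mu$ supported in $E$ satisfying $\mu(B(x,r)) \leq r^d$ for every ball $B(x,r)$. The reverse inequality $\int_E g \, d\mu \leq \int_E g \, d\hdc$ holds trivially via the layer cake and the fact that $\mu(A) \leq \hdc(A)$ for any Borel $A$ (cover $A$ by balls $B_i$ with $\sum r_i^d \leq \hdc(A) + \varepsilon$ and use subadditivity of $\mu$). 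This duality is recorded in the appendix of \cite{azzam2018analyst}.

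Granted this, the plan is as follows. Fix $\mu \in \mathcal{M}_d(E)$ and apply classical H\"older with respect to the genuine measure $\mu$:
\[
\int_E f \, d\mu \leq \mu(E)^{1-1/p} \left( \int_E f^p \, d\mu \right)^{1/p}.
\]
Using $\mu(E) \leq \hdc(E)$ and $\int_E f^p \, d\mu \leq \int_E f^p \, d\hdc$ (by the layer cake comparison just noted), the right-hand side is dominated by $\hdc(E)^{1-1/p} \bigl( \int_E f^p \, d\hdc \bigr)^{1/p}$. Taking the supremum over $\mu \in \mathcal{M}_d(E)$ on the left and invoking the duality yields
\[
\int_E f \, d\hdc \leq c_n \, \hdc(E)^{1-1/p} \left( \int_E f^p \, d\hdc \right)^{1/p},
\]
and dividing through by $\hdc(E)$ is the advertised estimate. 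The endpoint $p = \infty$ is immediate, since $\int_E f \, d\hdc \leq \|f\|_{L^\infty(E)} \hdc(E)$ follows at once from the layer cake formula.

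The main obstacle is thus the duality statement: for a true measure it is tautological, but for $\hdc$ it relies on a dyadic construction of a nearly-maximizing Frostman measure. The hypothesis that $E$ is compact or bounded open and that $f$ is continuous enters here, ensuring that the level sets $\{f > t\}$ are well-behaved (open, in fact) and that the Frostman construction can be carried out in a compact setting. Once this ingredient is in hand, the rest of the argument is a one-line application of genuine H\"older on each admissible measure followed by taking a supremum.
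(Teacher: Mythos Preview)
The paper does not actually prove this lemma: it is stated without proof, with a general reference to Section~2 and the Appendix of \cite{azzam2018analyst} for background on Choquet integration. Your argument via the Frostman-type duality for $\hdc$ is correct and is the standard route to this inequality; it is essentially the argument one finds in the Azzam--Schul appendix, so there is nothing to compare.
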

\noindent
We recall the various $\beta$-numbers that we will use and state some of their properties. 
\begin{definition}[Jones]\label{d:jones-beta}
Let $E \subseteq \R^n$ and $B$ a ball. Define
\begin{align*}
\beta_{E,\infty}^d(B) = \frac{1}{r_B}\inf_L \sup\{\text{dist}(y,L) : y \in E \cap B\}
\end{align*}
where $L$ ranges over $d$-planes in $\R^n.$ 
\end{definition}
\begin{definition}
	Let $\mu$ be a measure on $\R^n$ with $\mu(B) \lesssim r(B)^d$, $p\geq1$. Then set
	\begin{equation*}		
		\beta_\mu^{d,p}(B) := \inf_L\left( \frac{1}{r(B)^d} \int_B \left( \frac{\dist(y, L)}{r(B)}\right)^p \, d\mu(y) \right)^{\frac{1}{p}}.
\end{equation*} 
\end{definition}
\begin{definition}[Azzam-Schul]\label{d:content-beta}
Let $1 \leq p < \infty,$ $E \subseteq \R^n$ and $B$ a ball. For a $d$-dimensional plane $L$ define
\begin{align} \label{e:beta-content} \beta^{d,p}_E(B,L) = \left( \frac{1}{r_B^d} \int_{E \cap B} \left( \frac{\dist(y,L)}{r_B} \right)^p \, d\mathcal{H}^d_\infty(y) \right)^\frac{1}{p}.
\end{align}
Then $\beta_{E}^{d,p}(B) = \inf_{L} \beta_{E}^{d,p}(B,L)$, where, again, the infimum is over all affine planes $L \in \dA(n,d)$. 
\end{definition}
\noindent
These coefficients and their variants have found several applications (\cite{villa2020tangent, hyde2021d, hyde2021cone})
We will need the following lemma.
\begin{lemma}[{\cite{azzam2018analyst}, Lemma 2.21}] \label{lemma:azzamschul}
Let $1 \leq p < \infty$ and $E_1, E_2 \subset \R^n$. Let $x \in E_1$ and fix $r>0$. Take some $y \in E_2$ so that $B(x,r) \subset B(y, 2r)$. Assume that $E_1, E_2$ are both lower content $d$-regular with constant $c$. Then
\begin{align*}
   \beta_{E_1}^{d,p} (x,r) \lesssim_c \beta_{E_2}^{d,p} (y, 2r) + \left( \frac{1}{r^d} \int_{E_1 \cap B(x, 2r)} \left(\frac{\dist (y, E_2)}{r} \right)^p \, d \hdc(y)\right)^{\frac{1}{p}}.
\end{align*}
\end{lemma}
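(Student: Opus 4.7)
The strategy is to test $\beta_{E_1}^{d,p}(x,r)$ against an almost-optimal affine $d$-plane for $\beta_{E_2}^{d,p}(y,2r)$, and to compare the resulting distance integrand on $E_1$ to the same quantity on $E_2$ via the triangle inequality through nearest points. Fix $\eta>0$ and choose an affine $d$-plane $L$ with
$$\left(\frac{1}{(2r)^d}\int_{E_2\cap B(y,2r)}\Bigl(\frac{\dist(w,L)}{2r}\Bigr)^{\!p}d\hdc(w)\right)^{\!1/p}\leq(1+\eta)\,\beta_{E_2}^{d,p}(y,2r).$$
Since $L$ is an admissible competitor in the infimum defining $\beta_{E_1}^{d,p}(x,r)$, it suffices to bound
$$I:=\int_{E_1\cap B(x,r)}\dist(z,L)^p\,d\hdc(z)$$
by the sum of $(2r)^{d+p}\beta_{E_2}^{d,p}(y,2r)^p$ and $\int_{E_1\cap B(x,2r)}\dist(z,E_2)^p\,d\hdc(z)$, with constants depending only on $c,n,d,p$.

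For this I would apply the Choquet layer-cake identity
$$I=p\int_0^{\infty}t^{p-1}\hdc\{z\in E_1\cap B(x,r):\dist(z,L)>t\}\,dt,$$
together with the triangle-inequality splitting $\{\dist(z,L)>t\}\subseteq U_t\cup V_t$, where
$$U_t:=\{z\in E_1\cap B(x,r):\dist(z,E_2)>t/2\},\qquad V_t\subseteq N_{t/2}(A_t),$$
and $A_t:=\{z'\in E_2:\dist(z',L)>t/2\}$. Running layer cake on $U_t$ reconstitutes precisely the error Choquet integral appearing in the statement (with $B(x,r)\subset B(x,2r)$ giving slack). For $V_t$, I would cover $A_t\cap B(y,3r)$ by a Vitali-type family $\{B(z_i,t/10)\}_{i=1}^N$ with disjoint kernels $B(z_i,t/50)$ and $z_i\in E_2$; then $V_t\subset\bigcup_i B(z_i,t)$, so $\hdc(V_t)\lesssim N t^d$. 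Lower content $d$-regularity of $E_2$ gives $\hdc(E_2\cap B(z_i,t/50))\gtrsim_c t^d$, and a packing/separation argument — using disjointness of the kernels and the regularity of $E_2$ to prevent any single ball in a competing cover from cheaply swallowing several kernels — converts this into $N t^d\lesssim_c\hdc(A_{t/8}\cap B(y,3r))$.

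Integrating back in $t$, the $V_t$-contribution becomes a constant multiple of $\int_{E_2\cap B(y,3r)}\dist(z',L)^p\,d\hdc(z')$. The slight mismatch between $B(y,3r)$ and the nominal $B(y,2r)$ is absorbed either by choosing $L$ optimal on a suitably enlarged ball from the outset (at the cost of inflating the constant $\lesssim_c$), or by covering the annulus $B(y,3r)\setminus B(y,2r)$ by $O_{n,d}(1)$ balls of radius $\sim r$ centred in $E_2$ and summing — each carries the same $\beta$-type bound after a controlled doubling. Dividing by $r^{d+p}$, taking $p$-th roots, and letting $\eta\to 0$ gives the asserted inequality.

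The main obstacle is the packing step converting disjointness of $\{B(z_i,t/50)\}$ into the additivity-type bound $N t^d\lesssim_c\hdc(A_{t/8}\cap B(y,3r))$. Because Hausdorff content is only subadditive, this step uses the lower content $d$-regularity hypothesis in an essential way: each kernel ball contributes a definite amount of $E_2$-content, and separation of kernels combined with that regularity prevents any external cover of the union from collapsing many kernels into one cheap ball. Every other step is routine manipulation of the triangle inequality and the Choquet layer-cake formula.
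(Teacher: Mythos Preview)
The paper does not supply a proof of this lemma; it is simply quoted from \cite{azzam2018analyst} as a tool. So there is no ``paper's own proof'' to compare against, and the question is only whether your argument stands on its own.

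Your overall architecture---test against a near-optimal plane $L$ for $\beta_{E_2}^{d,p}(y,2r)$, run layer-cake on $E_1$, and split each super-level set via the triangle inequality into a ``far from $E_2$'' part $U_t$ and a ``close to a bad point of $E_2$'' part $V_t$---is the right one, and the $U_t$ contribution reconstitutes the error integral exactly as you say.

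The genuine gap is at the step you yourself flag: the claimed packing inequality
\[
N\,t^d \;\lesssim_c\; \hdc\bigl(A_{t/8}\cap B(y,3r)\bigr)
\]
does \emph{not} follow from disjointness of the kernels $B(z_i,t/50)$ together with lower content $d$-regularity of $E_2$. Lower content regularity supplies only \emph{lower} bounds $\hdc(E_2\cap B(z_i,t/50))\gtrsim_c t^d$; it gives no $d$-dimensional \emph{upper} bound on how many $t$-separated points of $E_2$ a single covering ball can swallow, so your ``no cheap swallowing'' heuristic cannot be made rigorous from the stated hypotheses. A covering ball $C$ of radius $\rho\gg t$ can contain up to $\sim(\rho/t)^n$ of the $(t/25)$-separated centres $z_i$, and nothing in the hypotheses forces the sharper bound $\sim(\rho/t)^d$ that your argument needs. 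Concretely, take $n=3$, $d=1$, let $E_2$ be a $2$-plane (which is lower content $1$-regular with $c\sim 1$), and let $L\subset E_2$ be a line. Then $A_t\cap B(y,3r)$ is essentially a pair of half-discs; a maximal $(t/5)$-net in it has $N\sim (r/t)^2$ points, so $Nt\sim r^2/t$, whereas $\mathcal H^1_\infty\bigl(A_{t/8}\cap B(y,3r)\bigr)\sim r$. For $t\ll r$ your packing inequality fails by an arbitrarily large factor. (The lemma itself survives this example, since here $\beta_{E_2}^{1,p}(y,2r)\sim 1$; it is only your intermediate bound that breaks.)

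Put differently, the estimate $\hdc(V_t)\lesssim Nt^d$ is already very lossy when $E_2$ is ``fat'', and the attempt to close the loop by bounding $Nt^d$ back down to a content on $E_2$ cannot work without some \emph{upper} $d$-regularity of $E_2$, which is not assumed. The Azzam--Schul argument handles the $V_t$ contribution by a different route that does not pass through a net-count on $E_2$; if you want to repair your approach, you will need a genuinely different mechanism at this step.
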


\section{Proof of Theorem \ref{t:main}}
In this section we prove Theorem \ref{t:main}.

\subsection{Lower content regularity and coronisation}
Recall that a set $E \subset \R^n$ is lower content $(d, \cc_1)$-regular if, for all balls $B$ centered on $E$, 
\begin{align*}
    \hdc(E \cap B) \geq \cc_1 r(B)^d. 
\end{align*}
A nice fact about lower content regular set is that they admit a coronisation by Ahlfors regular sets. We first show that sets with $d$-PBP are in fact lower content regular, and the describe the corona construction.
\begin{lemma}\label{l:low-reg-E}
Let $E \subset \R^n$ be a set with $d$-PBP with constants $\delta, \ve>0$. Then $E$ is lower content $d$-regular with constant $c\sim_d \delta$.
\end{lemma}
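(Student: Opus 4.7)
The plan is to use that $d$-dimensional Hausdorff content of a set is bounded below by the $d$-dimensional Hausdorff content of any Lipschitz image of it, and that for subsets of a $d$-plane the Hausdorff content is comparable (up to a purely dimensional constant) to the Hausdorff measure. These two facts together convert the PBP lower bound on $\hd$ of a projection into a lower content bound on $E \cap B$.

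More concretely, fix a ball $B$ centered on $E$ with $r(B) \leq \diam(E)$. By the $d$-PBP hypothesis \eqref{e:PBP}, there exists a $d$-plane $V = V_B \in \dG(n,d)$ such that
\begin{equation*}
   \hd(\pi_V(E \cap B)) \geq \delta r(B)^d.
\end{equation*}
Since the orthogonal projection $\pi_V$ is $1$-Lipschitz, and the $d$-dimensional Hausdorff content is monotone under $1$-Lipschitz maps, we have
\begin{equation*}
   \hdc(\pi_V(E \cap B)) \leq \hdc(E \cap B).
\end{equation*}
The set $\pi_V(E \cap B)$ lies inside a ball of radius $r(B)$ in the $d$-plane $V \simeq \R^d$. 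For any subset $A$ of $\R^d$ one has the two-sided comparison $\hdc(A) \sim_d \hd(A)$: the upper bound $\hdc(A) \leq \hd(A)$ is standard, while the lower bound $\hdc(A) \gtrsim_d \hd(A) \sim_d \mathcal{L}^d(A)$ follows from the isodiametric inequality applied to any covering realizing $\hdc(A)$.

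Combining these three facts yields
\begin{equation*}
   \hdc(E \cap B) \;\geq\; \hdc(\pi_V(E \cap B)) \;\gtrsim_d\; \hd(\pi_V(E \cap B)) \;\geq\; \delta\, r(B)^d,
\end{equation*}
so $E$ is lower content $(d, \cc_1)$-regular with $\cc_1 \sim_d \delta$, as desired. Note that the full strength of PBP (the existence of a whole ball $B(V_B, \ve)$ of planes with large projections) is not needed here: a single plane with a large projection at each scale suffices. There is no real obstacle in the argument; the only point requiring care is recording the dimensional constant in the equivalence $\hdc \sim_d \hd$ on subsets of $\R^d$.
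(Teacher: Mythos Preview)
Your proof is correct and follows essentially the same route as the paper: both use the $d$-PBP lower bound on $\hd(\pi_V(E\cap B))$, the equivalence $\hdc \sim_d \hd$ for subsets of a $d$-plane, and the fact that the $1$-Lipschitz map $\pi_V$ does not increase $\hdc$. The only cosmetic difference is that the paper spells out the comparison $\hd \lesssim_d \hdc$ on $V$ by covering $\pi_V(E\cap B)$ with balls and using $\hd(B') \lesssim_d r(B')^d$, whereas you invoke it directly via the isodiametric inequality; your observation that a single big projection suffices is also correct.
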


\begin{proof}
Without loss of generality, we identify $V$ with $\R^d$. For an arbitrary $\ve_1>0$, let $\dB$ be a family of balls in $\R^d$ so that $\sum_{B' \in \dB} r(B')^d \leq  \hd (\pi_V(E \cap B)) - \ve_1$. Note that, since these are balls in a $d$-plane, $\hd(B' \cap \pi_V(B\cap E)) \lesssim_d r(B')^d$. Let $\ve, \delta$ be the parameter with which $E$ satisfied $d$-PBP. Fix a ball $B$ centered on $E$, with $r(B) \leq \diam(E)$, and a plane $V$ in $B(V_B, \ve)$. Then, 
\begin{align*}
    \delta r(B)^d \leq & \hd(\pi_V(E \cap B))\\
    & \leq \sum_{B' \in \dB} \hd(\pi_V(E \cap B) \cap B') \lesssim_d \sum_{B' \in \dB} r(B')^d \leq C\hd(\pi_V(E \cap B)) + C\ve_1 \\
    & \quad \quad \leq C'\hdc(\pi_V(E \cap B)) + C\ve_1. 
\end{align*}
Now, since $\pi_V$ is $1$-Lipschitz and $\ve_1$ was arbitrary, we obtain the lemma. The lower content regularity constant $c$ depends only on $\delta$ and $d$, since $C$ in the above display only depends on $d$. 
\end{proof}

\subsection{Discrete approximation} \label{s:ER}
In this subsection we recall the corona construction from \cite{azzam2019quantitative}. 
\begin{lemma}[\cite{azzam2019quantitative}, Main Lemma] \label{l:corona}
Let $k_0>0$, $\tau>0$, $d>0$ and $E \subset$ be a closed subset that is lower content $(d, \cc_1)$-regular. Let $\dD_k$ denote the Christ-David cubes on $E$ of scale $k$ and $\dD=\bigcup_{k\in\bZ} \dD_{k}$. Let $Q_{0}\in \dD_{0}$ and $\dD(Q_0, k_0)=\dD(k_0)=\bigcup_{k=0}^{k_0}\{Q\in \dD_{k}|Q\subseteq Q_0\}$. Then, for\footnote{$\Top(k_0)$ is a sub-collection of Christ-David cubes.} $R \in \Top(k_{0})\subseteq \dD(k_{0})$, we may partition $\dD(k_{0})$ into stopping-time regions which we call $\Tree(R)$; this partition has the following properties:
\begin{enumerate}[leftmargin=0.8cm]
\item There exists a constant $\eta=\eta(d, \cc_1)$ so that
\begin{equation}
\label{e:ADR-packing}
\sum_{R \in \Top(k_{0})} \ell(R)^{d} \leq \eta^{-1} \dH^{d}(Q_0),
\end{equation}
and $\eta\to 0$ as $\cc_1 \to 0$\footnote{This is not explicitly stated in \cite{azzam2019quantitative}, but it can be deduced from the proof, specifically see (3.4), (3.5) and (3.10) there.}.
\item Given $R\in \Top(k_{0})$ and a stopping-time region $\Ss \subseteq \Tree(R)$ with maximal cube $T=T(\Ss)$, let  $\dF=\dF(\Ss)$ denote the minimal cubes of $\Ss$ and set
\begin{align}\label{e:d_F}
    d_{\Ss, T, \dF}(x) = d_{T}(x) := \inf_{Q \in \dF} \ps{ \ell(Q) + \dist(x,Q)}.
\end{align}
For $C_{0}>4$ and $\tau>0$, there is a collection  $\cC(T(\Ss))=\cC_T \subset \Delta$ of disjoint dyadic cubes covering $C_{0}B_{T}\cap E$ so that 
if 
\[
E(T(\Ss))=E_T:=\bigcup_{I\in \cC_T} \d_{d} I,\]
where $\d_{d}I$ denotes the $d$-dimensional skeleton of $I$, then the following hold:
\begin{enumerate}[label=\textup{(}\alph*\textup{)}, leftmargin=0.8cm]
\item $E_T$ is Ahlfors $d$-regular with constants depending on $C_{0},\tau,d,$ and $\cc_1$.
\item We have the containment
\begin{equation}
\label{e:contains}
C_{0}B_{T}\cap E \subseteq \bigcup_{I\in \cC} I\subseteq 2C_{0}B_{T}.
\end{equation}

\item $E$ is close to $E_T$ in $C_{0}B_{T}$ in the sense that
\begin{equation}
\label{e:adr-corona}
\dist(x,E_T)\lec  \tau d_{T}(x) \;\; \mbox{ for all }x\in E\cap C_{0}B_{T}.
\end{equation}
\item The dyadic cubes in $\cC_T$ satisfy
\begin{equation}
\label{e:whitney-like}
\ell(I)\approx \tau \inf_{x\in I} d_{T}(x) \mbox{ for all }I\in \cC_T.
\end{equation}
\end{enumerate}
\end{enumerate}
\end{lemma}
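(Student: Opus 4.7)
The plan is to build the trees and the Ahlfors regular approximants via a top-down stopping-time construction, a standard scheme in quantitative rectifiability. I would define $\Top(k_0)$ inductively, starting with $Q_0 \in \Top(k_0)$, and for any $R \in \Top(k_0)$ grow $\Tree(R)$ by including every descendant $Q \subseteq R$ reachable from $R$ without triggering a prescribed stopping rule. A natural rule, calibrated to both $\cc_1$ and the desired approximation quality, flags $Q$ when the $\hd$-mass in $C_0 B_Q$ first becomes \emph{too large} relative to $\ell(Q)^d$, or when the local geometry has wandered past a $\tau$-scale Whitney threshold measured by the function $d_T$ in \eqref{e:d_F}. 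The maximal cubes thus selected form the set $\dF$ of minimal cubes, and their $\dD$-children are enrolled as the next layer of $\Top(k_0)$. The axioms (1)--(3) of Definition \ref{d:ST} are immediate from the construction.

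Next I would prove the packing bound \eqref{e:ADR-packing} by a Carleson-style double count: every $R \in \Top(k_0) \setminus \{Q_0\}$ is born because its parent $R'$ exceeded a density-up threshold, so $\hd(C_0 B_{R'}) \gtrsim K \ell(R')^d$ for a large $K$, while the lower content regularity of $E$ provides $\hdc(R') \gtrsim \cc_1 \ell(R')^d$. Charging each newborn top cube to a fixed fraction of the excess mass inside its parent and summing yields $\sum_R \ell(R)^d \lesssim_{K,\cc_1} \hd(Q_0)$. The dependence $\eta \to 0$ as $\cc_1 \to 0$ falls out because the threshold $K$ must grow as $\cc_1$ shrinks in order to preserve the Ahlfors regularity claimed in (2a). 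For the geometric part (2), given $\Ss \subseteq \Tree(R)$ with top $T$ and minimal cubes $\dF$, I would take a Whitney dyadic decomposition $\cC_T \subset \Delta$ of a thickening of $C_0 B_T \cap E$, selecting $I \in \Delta$ maximal subject to $\ell(I) \leq \tau \inf_{x \in I} d_T(x)$, and define $E_T = \bigcup_{I \in \cC_T} \d_d I$. Properties (c) and (d) are immediate from the Whitney selection, \eqref{e:contains} is by construction, and Ahlfors $d$-regularity (a) follows from a standard counting argument on neighbouring dyadic cubes of comparable size, combined with the fact that inside the tree no scale has yet triggered the density-up stopping.

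The main obstacle is the simultaneous calibration of two opposing demands on the stopping threshold: it must be strict enough that each $E_T$ is Ahlfors regular with constants depending only on $C_0,\tau,d,\cc_1$, and loose enough that the packing sum in (1) remains controlled by $\eta^{-1} \hd(Q_0)$ with $\eta = \eta(d,\cc_1)$. Sharpening the rule improves the regularity of $E_T$ but multiplies the number of trees; the quantitative book-keeping recorded around (3.4)--(3.10) of \cite{azzam2019quantitative} is needed to close this trade-off and to confine the $\cc_1 \to 0$ degeneration to $\eta$, not to the regularity constants of the approximants.
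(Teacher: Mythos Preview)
The paper does not give its own proof of this lemma: it is quoted as the Main Lemma of \cite{azzam2019quantitative} and used as a black box, so there is nothing in the present paper to compare your sketch against. Your outline is broadly consistent with the construction in that reference---density-up stopping, Carleson packing of the top cubes from the excess mass, and a Whitney-type dyadic family whose $d$-skeletons form the Ahlfors regular approximant.

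One point of confusion in your sketch: you include as part of the stopping rule that ``the local geometry has wandered past a $\tau$-scale Whitney threshold measured by the function $d_T$''. This is circular, since $d_T$ in \eqref{e:d_F} is defined in terms of the minimal cubes $\dF$ of the stopping-time region, and so cannot be used to decide when to stop. In the actual construction the stopping is a pure density-up criterion; only after $\dF$ is fixed does one form $d_T$ and then select the Whitney cubes $\cC_T$ maximal subject to $\ell(I)\lesssim \tau\inf_{x\in I} d_T(x)$. With that correction your plan matches the scheme in \cite{azzam2019quantitative}.
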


\begin{remark}\label{r:notation1}
We will apply Lemma \ref{l:corona} with $\Ss=\Tree(R)$, so with the notation of the lemma, $R=T(\Ss)$. So we also put $E(T(\Ss))=E(R)=:E_R$.
\end{remark}

\begin{lemma}\label{l:approx-BPMD}
Let $E \subset \R^n$ be a set with $d$-PBP. Let $E_R$ be one of the sets from Lemma \ref{l:corona}. Then $E_R$ also has $d$-PBP \textup{(}with possibly a slightly different constant $\delta'$  which equals to $\delta$ up to a multiplicative constant depending only of $d$\textup{)}.
\end{lemma}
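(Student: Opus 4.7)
The plan is a two-scale argument. Fix a ball $B = B(y, r)$ centered on $y \in E_R$ with $r \leq \diam(E_R)$. Since $y \in \partial_d I_y$ for some Whitney cube $I_y \in \mathcal{C}_T$, I would set $\ell_y := \ell(I_y) \approx \tau \inf_{I_y} d_T$, and split cases according to whether $r$ is small or large compared to $\ell_y$.

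In the small-scale regime $r \leq c \ell_y$ for a small dimensional constant $c$, the ball $B$ contains a $d$-dimensional neighborhood of $y$ inside the face $F \subseteq \partial_d I_y$, a piece of a coordinate $d$-plane with $\hd$-mass $\gtrsim_{n,d} r^d$. Taking $V_B$ to be the direction parallel to $F$, the map $\pi_{V_B}|_F$ is an isometry, and for $V$ in a fixed dimensional neighborhood of $V_B$ (say $\|\pi_V - \pi_{V_B}\|_{\rm op} \leq 1/2$), $\pi_V|_F$ remains bi-Lipschitz with dimensional constants, yielding $\hd(\pi_V(E_R \cap B)) \gtrsim_{n,d} r^d$. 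At these scales PBP holds with constants independent of $\delta, \varepsilon$, which is stronger than what is claimed.

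In the large-scale regime $r > c \ell_y$, I would transfer the PBP of $E$ to $E_R$. Since $\mathcal{C}_T$ covers $E \cap C_0 B_T$ and $I_y$ is a Whitney cube close to $E$, one finds $y' \in E$ with $|y - y'| \lesssim \ell_y \lesssim r$, and applies $d$-PBP of $E$ to the ball $B(y', \rho r)$ for a small $\rho = \rho(\tau) > 0$, obtaining $V_B$ with $\hd(\pi_V(E \cap B(y', \rho r))) \geq \delta (\rho r)^d$ for $V \in B(V_B, \varepsilon)$. For the transfer, for each $x \in E \cap B(y', \rho r)$, \eqref{e:adr-corona} gives $x^* \in E_R$ with $|x - x^*| \lesssim \tau d_T(x) \lesssim \ell(I_x)$, where $I_x \in \mathcal{C}_T$ contains $x$. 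I would then partition these $x$'s by cube size: for $x$ in cubes with $\ell(I_x) \leq \tau^{1/2} r$ the displacement is negligible compared to $\delta^{1/d} r$ once $\tau$ is small enough, so these points transfer cleanly to $\pi_V(E_R)$ inside a slightly dilated ball; for $x$ in larger cubes, only $O_\tau(1)$ such cubes can intersect $B(y', \rho r)$, and each contributes its $d$-face $\partial_d I \subseteq E_R$ directly to $\pi_V(E_R)$ with mass $\gtrsim \ell(I)^d$.

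The main obstacle is precisely this transfer step: points of $E$ sitting in large Whitney cubes have $E_R$-approximants on $\partial_d I$ at distance up to $\ell(I)$, potentially comparable to $r$, which would spoil a naive fattening argument of the form $\pi_V(E \cap B') \subseteq \pi_V(E_R \cap CB') + B_d(0,\epsilon)$. The key observation to bypass this is that such a large cube's $d$-skeleton is automatically contained in $E_R$ and supplies its own projection mass without any transfer; the small-cube points are handled via the clean fattening estimate. Combining these two sources of mass, choosing $\tau$ small relative to $\delta$ and $n, d$, and rescaling the ball by a constant $C'(n, d)$, should give $\hd(\pi_V(E_R \cap B)) \gtrsim_{n, d} \delta r^d$ for $V$ in an $\varepsilon/2$-neighborhood of $V_B$, yielding $d$-PBP for $E_R$ with $\delta' \gtrsim_d \delta$ as stated.
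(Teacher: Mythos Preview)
Your two–scale split and your treatment of the small–scale regime (a piece of a $d$–face projects bi–Lipschitzly) are exactly what the paper does. The issue is in the large–scale regime.

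The ``clean fattening estimate'' you propose for points of $E$ lying in small Whitney cubes does not close. Knowing that every such $x$ has an $E_R$–neighbour $x^*$ with $|x-x^*|\lesssim \tau^{1/2}r$ only gives
\[
\pi_V(E\cap B')\subseteq \bigl(\pi_V(E_R\cap CB)\bigr)_{C\tau^{1/2}r},
\]
and an inclusion of this type yields \emph{no} lower bound on $\hd(\pi_V(E_R\cap CB))$: a set of large $\hd$–measure in a $d$–plane can sit inside the $\epsilon$–neighbourhood of a null set. This is precisely why you are forced to make $\tau$ small in terms of $\delta$, which is incompatible with the stated conclusion that $\delta'\sim_d\delta$ with a constant depending only on $d$ (and, in the paper, $\tau$ is fixed once and for all before $\delta,\ve$ enter).

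The paper bypasses fattening entirely with one geometric fact: for any dyadic cube $I\subset\R^n$ and any $V\in\dG(n,d)$,
\[
\pi_V(\partial_d I)=\pi_V(I),
\]
because every fibre $\pi_V^{-1}(p)$ (an affine $(n-d)$–plane) that meets $I$ must meet $\partial_d I$. Since the Whitney property \eqref{e:whitney-like} together with the $1$–Lipschitz nature of $d_T$ forces every cube $J\in\cC_R$ meeting $B(y,r')$ to satisfy $\ell(J)\lesssim r$, each such $J$ lies in $B(x,Cr)$, and one gets the \emph{direct containment}
\[
\pi_V(E\cap B(y,r'))\ \subseteq\ \pi_V\Bigl(\textstyle\bigcup_{J} J\Bigr)\ =\ \pi_V\Bigl(\textstyle\bigcup_{J}\partial_d J\Bigr)\ \subseteq\ \pi_V\bigl(E_R\cap B(x,Cr)\bigr).
\]
This is the content of the paper's displayed chain $\hdc(\pi_V(E_R\cap B))\approx\hdc(\pi_V(\cup I))\gtrsim\hdc(\pi_V(E\cap B'))$; no small/large split and no coupling of $\tau$ with $\delta$ are needed. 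You already invoke exactly this $\partial_d I$ idea for your ``large'' cubes—the point is that it applies uniformly to all cubes and makes the fattening step (and hence the gap) disappear.
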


\begin{proof}
Let $x \in E_R$. By definition, there exists a dyadic cube $I \in \cC_R$ so that $x \in \partial_d I$. Suppose first that $r \geq 10 \ell(I)$. Since $I \cap E \neq \emptyset$, there exists a point $y \in E$ with $|x-y| \lesssim_n \ell(I)$. Thus we can find a radius $r' \approx r$ and so that $r' \gtrsim_n \ell(I)$ and for which $B(y, r') \subset B(x, r)$. 
Thus 
\begin{align*}
    \hdc(\pi_V(E_R \cap B(x,r))) & \approx \hdc\left( \pi_V\left( \cup_{I \in \cC_R(x,r)} I \right) \right)\\
    & \gtrsim \hdc\left( \pi_V (E \cap B(y,r'))\right) \gtrsim \delta (r')^d \approx \delta r^d.  
\end{align*}
Moreover, since $\pi_V(E_R \cap B(x,r))$ is a subset of a $d$-plane, its Hausdorff content is comparable to its Hausdorff measure. This proves the lemma for $r\geq c \ell(I)$. If $r< c \ell(I)$, then we simply note that for all radii $r' \geq r$, $E_R$ has large $d$-projections in plenty of directions because it's a union of $d$-dimensional planes. 
\end{proof}

\subsection{Beta numbers estimates}
\begin{remark}
In view of Corollary \ref{t:corollary}, we will keep careful track of the dependence of the various constants on $\ve$ and $\delta$ (the parameteres from the PBP condition). On the other hand, we will often `forget' dependence on $n,d, \tau$, which are fixed throughout. Hence, in all estimates, $\lesssim$ is allowed to depend on $n,d, \tau$, but \textit{not} on $\delta, \ve$.
\end{remark}
In this subsection we will prove the following.
\begin{proposition}\label{l:main-lemma}
Let $n\in \N$, $n-1\geq d\geq 1$. If $E \subset \R^n$ has $d$-PBP with parameters $\ve, \delta>0$, then for any $Q_0 \in \dD(E)$, we have
\begin{align*}
    \ell(Q_0)^d + \sum_{Q \in \dD_E(Q_0)} \beta_E^{d,2}(3B_Q)^2 \ell(Q)^d \lesssim_{n,d} \gamma(\ve, \delta)^{-1} \, \hd(Q_0),
\end{align*}
where $\gamma(\ve, \delta) \to 0$ as $\ve \to 0$ or $\delta\to 0$ \textup{(}or both\textup{)}.
\end{proposition}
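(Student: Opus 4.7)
The strategy is to combine Orponen's PBP $\Rightarrow$ BPLG theorem \cite{orponen2021plenty} with the coronization machinery of Azzam-Schul, reducing the problem on the lower content regular set $E$ to the same problem on Ahlfors regular approximations, where a usual TST already applies. First I would note that by Lemma \ref{l:low-reg-E}, $E$ is lower content $d$-regular with constant $\cc_1 \sim_d \delta$, which is the only hypothesis required to invoke the corona construction. Applying Lemma \ref{l:corona} to $\dD(Q_0, k_0)$ yields a partition $\{\Tree(R)\}_{R \in \Top(k_0)}$ with packing estimate \eqref{e:ADR-packing}, together with Ahlfors regular approximating sets $E_R = \bigcup_{I \in \cC_R} \partial_d I$ which are uniformly close to $E$ on $C_0 B_R$ in the sense of \eqref{e:adr-corona} and \eqref{e:whitney-like}.

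Next I would exploit the fact that by Lemma \ref{l:approx-BPMD}, each $E_R$ inherits $d$-PBP with parameters $(\delta', \ve)$, where $\delta' \approx_d \delta$. Since $E_R$ is \emph{Ahlfors} $d$-regular, Orponen's theorem produces BPLG constants $\theta, L$ that depend on $n, d, \delta, \ve$ only; I encode this dependence in the function $\gamma(\ve, \delta)$, which tends to $0$ as $\ve \to 0$ or $\delta \to 0$. Then I would apply the Azzam-Schul TST (Theorem A.1 in \cite{azzam2019quantitative}) to the Ahlfors regular set $E_R$ with BPLG, obtaining
\begin{equation*}
\sum_{Q \in \Tree(R)} \beta_{E_R}^{d,2}(C_0 B_Q)^2 \ell(Q)^d \lesssim \gamma(\ve, \delta)^{-1} \ell(R)^d.
\end{equation*}
To transfer this bound from $E_R$ back to $E$, I would apply Lemma \ref{lemma:azzamschul} on each ball $3 B_Q$, with $E_1 = E$ and $E_2 = E_R$: using lower content regularity of both sets,
\begin{equation*}
\beta_E^{d,2}(3B_Q)^2 \lesssim \beta_{E_R}^{d,2}(6B_Q)^2 + \frac{1}{\ell(Q)^d} \int_{E \cap 6B_Q} \Bigl( \frac{\dist(y, E_R)}{\ell(Q)} \Bigr)^2 d\hdc(y).
\end{equation*}
Multiplying by $\ell(Q)^d$ and summing over $Q \in \Tree(R)$, the main term is controlled by the previous display (after enlarging $C_0$ suitably), while the error term is bounded by $C\tau^2 \ell(R)^d$ via a Whitney-type double summation using \eqref{e:adr-corona} and \eqref{e:whitney-like}, as in \cite[Section 4]{azzam2019quantitative}. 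Finally, summing over $R \in \Top(k_0)$ and using \eqref{e:ADR-packing},
\begin{equation*}
\sum_{Q \in \dD(Q_0, k_0)} \beta_E^{d,2}(3 B_Q)^2 \ell(Q)^d \lesssim \gamma(\ve, \delta)^{-1} \sum_{R \in \Top(k_0)} \ell(R)^d \lesssim \gamma(\ve, \delta)^{-1} \hd(Q_0),
\end{equation*}
and monotone convergence as $k_0 \to \infty$ yields the proposition.

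The main obstacle, in my estimation, is not in any single step but in the bookkeeping of the constants, in particular in ensuring that the factor $\gamma(\ve, \delta)^{-1}$ enters only through the BPLG constant from Orponen's theorem (via the TST applied to $E_R$), and not through the other parts of the corona decomposition. This is why one must trace the $\delta$-dependence through \eqref{e:ADR-packing} (cf.\ the footnote in Lemma \ref{l:corona}), through the PBP constant of $E_R$ in Lemma \ref{l:approx-BPMD}, and finally through the TST-for-BPLG constant. The actual geometric content — that Ahlfors regular PBP sets satisfy a TST — is done once and for all by Orponen; the rest is the now-standard mechanism of passing between a lower content regular set and its Ahlfors regular cousins produced by the corona construction.
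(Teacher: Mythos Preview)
Your proposal is correct and follows essentially the same approach as the paper: corona decomposition of the lower content regular set $E$ into trees with Ahlfors regular approximants $E_R$, inheritance of PBP by $E_R$, Orponen's theorem to get BPLG (hence uniform rectifiability) for $E_R$, the strong geometric lemma/TST to bound $\sum \beta_{E_R}^2 \ell^d$, transfer to $E$ via Lemma~\ref{lemma:azzamschul} with a Whitney error term, and finally the packing estimate \eqref{e:ADR-packing}. The only cosmetic difference is that the paper reaches BPLG through the intermediate step WGL $+$ one big projection $\Rightarrow$ BPLG via \cite[Theorem~1.14]{david1993quantitative} and then invokes the David--Semmes strong geometric lemma, whereas you invoke Orponen's BPLG conclusion directly and then a TST for Ahlfors regular BPLG sets; both routes yield the same bound with the same $(\ve,\delta)$-dependence.
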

Theorem \ref{t:main} follows immediately from this proposition.
\begin{proof}[Proof of Theorem \ref{t:main}]
First, by \cite[(A.3)]{azzam2019quantitative}, we have that $\sum_{Q \subset Q_0} \beta_E^{d,p}(C_0 B_Q)^2 \,\ell(Q)^d \approx_{p, C_0} \sum_{Q \subset Q_0} \beta_E^{d,2}(3B_Q)^2 \ell(Q)^d$ whenever $C_0 >1$ and $1 \leq p \leq p(d)$. The converse inequality in Theorem \ref{t:main} follows from \cite[Theorem A.1(1)]{azzam2019quantitative}.
\end{proof}
We now focus on proving Lemma \ref{l:main-lemma}. We start off by 
applying Lemma \ref{lemma:azzamschul} with $E_1=E$ and $E_2 = E_{R}$. Let us check that with this choice the hypotheses are satisfied.
For $Q \in \dD$, recall that $z_Q$ denotes the center of $Q$. By the definition of $ \Tree(R)$, we see that if $Q \in \Tree(R)$, then there must exists a dyadic cube $I \in \cC_R$ which meets $Q$ (by Lemma \ref{l:corona}(2.b)). By \eqref{e:whitney-like}, $\ell(I) \lesssim \tau \ell(Q)$. Hence 
\begin{align}\label{e:XQ}
    \mbox{we find a point } \quad  x_Q \in E_{R} \quad \mbox{ such that } \quad |z_Q - x_Q| \leq 4 \tau \ell(Q),
\end{align}
and we obtain that
\begin{align}\label{e:form500}
    B_Q := B(z_Q, \ell(Q)) \subset B(x_Q, 2 \ell(Q))=: B'_Q.
\end{align}
 This implies that for each cube $Q \in  \Tree(R)$ the hypotheses of Lemma \ref{lemma:azzamschul} are satisfied (with $E_1 = E$ and $E_2 = E_{R, \rho}$). We then can estimate
 \begin{align*}
    \sum_{\substack{Q \in \Tree(R)\\ Q \in \dD(Q_0, k_0)}} \beta_{E}^{d, 2} (3 B_Q)^2 \ell(Q)^d  & \lesssim \sum_{\substack{Q \in \Tree(R)\\ Q \in \dD(Q_0,k_0)}} \beta_{E_{R}}^{d,2} (6 B_Q')^2 \, \ell(Q)^d \\
    & + \sum_{\substack{Q \in  \Tree(R)\\ Q \in \dD(Q_0,k_0)}} \ps{\frac{1}{\ell(Q)^d} \int_{6 B_Q\cap E} \ps{\frac{\dist(y, E_{R})}{\ell(Q)}}^2 \, d \hdc(y) }\, \ell(Q)^d \\
    & := I_1 + I_2.
\end{align*}
We estimate $I_1$. We denote by $\dD(E_R)$ a family of Christ-David cubes for $E_R$ obtained by applying Theorem \ref{theorem:christ} to $E_R$. Using \eqref{e:XQ} it is immediate to see that to each $Q' \in \dD(E_R)$, there correspond a bounded number (depending on $n, d$ and maybe $\tau$) of cubes $Q \in \Tree(R)$ with $\ell(Q) \approx \ell(Q')$ and so that \eqref{e:form500} holds. Thus we have that 
\begin{align*}
    I_1 \lesssim \sum_{Q \in \dD(E) \atop \ell(R) \lesssim \ell(R)} \beta_{E_R}^{d,2}(3 B_Q)^2 \ell(Q)^d 
\end{align*}
Since $E_R$ is Ahlfors $d$-regular (Lemma \ref{l:corona}) with constant $C$ depending on $C_0, \tau, d$ and $\cc_1$ (and thus, by Lemma \ref{l:low-reg-E}, on $\delta$) and has $d$-PBP  with parameters $(\delta, \ve)$, then, by the main result of \cite{orponen2021plenty}, it satisfies the \textit{weak geometric lemma}, that is 
\begin{align*}
    \sum_{Q: \beta_E^\infty(Q)> \ve_1} \ell(Q)^d \leq C(\ve_1; \ve, \delta) \ell(Q_0),
\end{align*}
where, if $\ve, \delta$ are fixed, $C(\ve_1; \ve, \delta) \to \infty$ as $\ve_1 \to 0$, but also, for fixed $\ve_1$, $C(\ve_1; \delta,\ve)\to\infty$ as $\delta\to 0$ or $\ve\to 0$. This and the fact that $E_R$ has (trivially from the PBP condition) one big projection, gives, via \cite[Theorem 1.14]{david1993quantitative}, that $E_R$ has BPLG (recall \eqref{e:BPLG}).
In particular, it is uniformly rectifiable. It follows from the strong geometric lemma (see \cite{david-semmes91}, Section 15) that 
\begin{equation}
    I_1 \lesssim C(\delta, \ve) \ell(R)^d,
\end{equation}
where $C(\delta, \ve) \to \infty$ as $\ve\to 0$ or $\delta\to 0$ (or both). \\

\noindent
We now now estimate $I_2$. 
Let $y \in 6B_R$. Recall that $\dF(R):= \dF(\Tree(R))$ (see Lemma \ref{l:corona}(2)) are the minimal cubes in $\Tree(R)$. For a given $S \in \dF(R)$, there is a dyadic cube $I \in \cC_R$ with $I \cap S \neq \emptyset$ and $\rho\ell(S) \leq \ell(I) \leq \ell(S)$. Note then that if $y \in S\in \dF(R)$, then
\begin{align}
    \dist(y, E_R) \lesssim \ell(S) + \dist(y', E_R) \stackrel{\eqref{e:adr-corona}, \eqref{e:whitney-like}}{\lesssim} \ell(S),
\end{align}
with $y' \in S \cap I$.
We write
\begin{align*}
   I_2 \lesssim \sum_{\substack{Q \in \Tree(R)\\ Q \in \dD(Q_0, k_0)}} \int_{6 B_Q\cap E} \ps{\frac{\dist(y, E_{R})}{\ell(Q)}}^2 \, d \hdc(y)
   &  \lesssim \sum_{\substack{Q \in \Tree(R)\\ Q \in \dD(Q_0,k_0)}}\sum_{\substack{S \in \dF(R) \\ S\cap 6B_Q \neq \emptyset }}  \frac{\ell(S)^{2+d}}{\ell(Q)^2}.
\end{align*}
We now swap the sums (which are all finite), to obtain that
\begin{align}
    I_2 & \lesssim \sum_{\substack{S \in \dF(R)\\ S \cap 6B_{Q_0} \neq \emptyset}}  \ell(S)^{d + 2} \sum_{\substack{Q \in  \Tree(R) \\ \exists Q' \in \dN(Q):\, Q' \supset S}}  \frac{1}{\ell(Q)^{ 2}}
    & \lesssim \sum_{\substack{S \in \Stop(R) \\ S \cap 6B_{Q_0} \neq \emptyset }} \ell(S)^{d + 2} \sum_{\substack{Q \in  \Tree(R) \\ \exists Q' \in \dN: \, Q' \supset S}} \frac{1}{\ell(Q)^{2}}, \label{e:estBeta1}
\end{align}
where $\dN(Q):=\{Q' \in \dD(k_0) \, |\, \ell(Q')=\ell(Q) \mbox{ and } Q' \subset 6B_Q \}$.
The number of cubes $Q \in  \Tree(R)$ of a given generation so that there exists a cube $Q' \in \dN(Q)$ for which $Q' \supset S$ is $\lesssim_n 1$. The interior sum in \eqref{e:estBeta1} is then a geometric series and
\begin{align*}
    \sum_{\substack{Q \in  \Tree(R) \\ \exists Q' \in \dN(Q):\, Q' \supset S}} \frac{1}{\ell(Q)^{ 2}} \lesssim_{n} \frac{1}{\ell(S)^{2}}. 
\end{align*}
Therefore we obtain 
\begin{align*}
    \eqref{e:estBeta1} \lesssim \sum_{\substack{S \in \dF(R) \\ S \cap 6B_{Q_0} \neq \emptyset}}  \frac{\ell(S)^{d +2 }}{\ell(S)^{2}} = \sum_{\substack{S \in \Stop(R) \\ S \cap 6B_{Q_0} \neq \emptyset}} \ell(S)^d.
\end{align*}
This latter sum is bounded above by $ \lesssim_\delta \ell(R)^d$ since $\dF(R)$ is a disjoint family of cubes and $\dF(R) \subset \Tree(R)$. We have proved the following claim. 
\begin{lemma}
\begin{align}\label{e:TreeEst}
\sum_{\substack{Q \in \Tree(R) \\ Q \in \dD(Q_0, k_0)}} \beta_E^{d,2}(3B_Q)^2 \ell(Q)^d \lesssim C(\ve, \delta) \ell(R)^d,
\end{align}
where $C(\ve, \delta) \to \infty$ as $\delta\to 0$ or $\ve \to 0$.
\end{lemma}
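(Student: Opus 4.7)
The plan is to assemble the lemma from the two estimates, $I_1$ and $I_2$, that were set up in the paragraphs immediately above its statement. The starting point is the decomposition coming from Lemma \ref{lemma:azzamschul} applied with $E_1 = E$ and $E_2 = E_R$, whose hypotheses are satisfied thanks to the point $x_Q$ in \eqref{e:XQ} and the containment \eqref{e:form500}. Summing the resulting pointwise inequality over $Q \in \Tree(R) \cap \dD(Q_0,k_0)$, the left-hand side of \eqref{e:TreeEst} is bounded by $I_1 + I_2$. It then suffices to bound each piece separately by $C(\ve,\delta)\,\ell(R)^d$.

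For $I_1$, the first step is to observe that the cubes $Q \in \Tree(R)$ that appear can be reindexed by a bounded-multiplicity correspondence with cubes of the Christ-David system $\dD(E_R)$ on $E_R$, as spelled out before the statement. Then I would invoke Lemma \ref{l:approx-BPMD} together with Ahlfors $d$-regularity of $E_R$ (Lemma \ref{l:corona}(2.a)) and Lemma \ref{l:low-reg-E}, so that Orponen's theorem applies to $E_R$ and yields the weak geometric lemma, whose constant $C(\ve_1;\ve,\delta)$ diverges as $\ve,\delta\to 0$ or $\ve_1\to 0$. Combined with the trivial big projection of $E_R$ that it inherits from $E$, this gives BPLG via \cite[Theorem 1.14]{david1993quantitative}, and hence uniform rectifiability. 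The strong geometric lemma of David--Semmes then produces $I_1 \lesssim C(\ve,\delta)\ell(R)^d$ with the required dependence on $\ve,\delta$.

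For $I_2$, I would exploit that $\Tree(R)$ has the stopping-time structure described in Lemma \ref{l:corona}. For each $y \in 6B_Q \cap E$ lying in a stopping cube $S \in \dF(R)$, the pair of estimates \eqref{e:adr-corona} and \eqref{e:whitney-like} yields $\dist(y,E_R) \lesssim \ell(S)$. Using lower content $d$-regularity of $E$, the Choquet integral of $(\dist(y,E_R)/\ell(Q))^2$ over $6B_Q \cap E$ is then majorised, up to constants, by $\sum_{S \in \dF(R),\, S \cap 6B_Q \neq \emptyset} \ell(S)^{d+2}/\ell(Q)^{2}$. Swapping the order of summation, the inner sum over $Q \supset S$ is a geometric series controlled by $O(1)$ cubes per scale (from the bounded multiplicity of the $\dN(Q)$ collections), whence it telescopes to $1/\ell(S)^{2}$; this reproduces the calculation leading to $I_2 \lesssim \sum_{S \in \dF(R)} \ell(S)^d$. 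Finally, since $\dF(R)$ is a disjoint family of sub-cubes of $R$, the lower content $d$-regularity of $E$ (Lemma \ref{l:low-reg-E}) gives $\sum_{S \in \dF(R)} \ell(S)^d \lesssim_\delta \ell(R)^d$, completing the bound on $I_2$.

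The main obstacle is the book-keeping of the $(\ve,\delta)$-dependence through the chain Orponen $\Rightarrow$ weak geometric lemma $\Rightarrow$ BPLG $\Rightarrow$ strong geometric lemma, since the only place where a genuinely PBP-dependent constant appears is in the $I_1$ step; elsewhere the constants are either universal or depend on $\delta$ merely through Lemma \ref{l:low-reg-E}. Gathering $I_1$ and $I_2$ gives \eqref{e:TreeEst} with $C(\ve,\delta)$ dominated by the $I_1$-constant, yielding the desired blow-up behaviour as $\ve\to 0$ or $\delta\to 0$.
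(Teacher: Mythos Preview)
Your proposal is correct and follows essentially the same approach as the paper: the decomposition into $I_1+I_2$ via Lemma~\ref{lemma:azzamschul}, the $I_1$ bound through the chain $E_R$ has PBP $\Rightarrow$ WGL (Orponen) $\Rightarrow$ BPLG (David--Semmes) $\Rightarrow$ strong geometric lemma, and the $I_2$ bound via the stopping-time estimate $\dist(y,E_R)\lesssim \ell(S)$, sum-swapping, and the disjointness of $\dF(R)$, all match the paper's argument line for line. Your tracking of where the $(\ve,\delta)$-dependence enters is also accurate.
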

We conclude that
\begin{align}
    \sum_{Q \in \dD(Q_0, k_0)} \beta_{E}^{d,2}(3B_Q)^2 \ell(Q)^d & \stackrel{{\rm Lemma} \ref{l:corona}}{\lesssim} \sum_{R \in \Top(k_0)} \sum_{Q\in \Tree(R)} \beta_{E}^{d,2}(3B_Q)^2 \ell(Q)^d  \notag \\ & \stackrel{\eqref{e:TreeEst}}{\lesssim}  C(\ve, \delta) \sum_{R \in \Top(k_0)} \ell(R)^d  \stackrel{\eqref{e:ADR-packing}}{\lesssim}  C(\ve, \delta)\eta^{-1} \hd(Q_0) \notag\\
    & \qquad \qquad \qquad \qquad \qquad \qquad \qquad \approx C(\ve, \delta) \hd(Q_0) \label{e:upper-bound}
\end{align}
This prove the upper bound in Theorem \ref{t:main}. The lower bound follows immediately from the main result of Azzam and Schul \cite{azzam2018analyst}, which says that
\begin{equation*}
    \hd(Q_0)\lesssim \ell(Q_0)^d + \sum_{Q \subset Q_0} \beta_E^{d, p}(3 B_Q)^2 \ell(Q)^d.
\end{equation*}
Here the implicit constant depends on $n, \cc_1 \sim \delta$ and the (fixed) parameteres appearing in \cite[Theorem A.1]{azzam2019quantitative}. See Theorem A.1. in \cite{azzam2019quantitative} for details.
So we are done with the proof of Theorem \ref{t:main}. 

\section{Proof of Corollary \ref{t:corollary}}
\begin{remark}
	The strategy of the proof is that of Bishop and Jones \cite{bishop1997wiggly}. We use extra care to keep track of the constant $\gamma(\delta, \ve)$.
\end{remark}
In this appendix we prove Corollary \ref{t:corollary}
Let $E \subset \R^n$ be a set with PBP, with constants $\delta, \ve>0$ (see \eqref{e:PBP}). For $N_0 \in \N$, fix a cube $R \in \dD_{N_0}(E)$. For an integer $m \geq N_0$, define
\begin{align*}
    \beta_m(R) =  \sum_{Q \in \dD_m(R)} \beta_E^{d,2}(Q) \ell(Q)^d. 
\end{align*}
Let $c<1$ be a constant of the form $2^{-s}$, $s \in \N$, to be fixed later. It will depend on $\rho>0$ from Theorem \ref{theorem:christ}. Set
\begin{align} \label{e:Delta-ck}
    \Delta_{k,c}(R) :=  \ck{ I \in \Delta \, |\, I \cap R \neq \emptyset \mbox{ and } \ell(I) = c\,2^{-k}},
\end{align}
and
\begin{align*}
    E_{R, k} := \bigcup_{I \in \Delta_{k,c}(R)} \partial_d I. 
\end{align*}

\noindent
\begin{lemma}There exists a constant $\gamma= \gamma(\ve, \delta)$, with $\gamma \to 0$ as $\delta\to 0$ or $\ve\to 0$ (or both), so that, if $R \in \dD_{N_0}(E)$
with $N_0 \leq k$, then
\begin{equation} \label{e:BJ-a}
    \sum_{m=N_0}^k \beta_m (R) + \ell(R)^d \leq C\gamma^{-1}\hd(E_{R, k}),
\end{equation}
where $C$ depends only on $d$ and $n$.
\end{lemma}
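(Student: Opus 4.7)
The strategy is to reduce \eqref{e:BJ-a} to Proposition \ref{l:main-lemma} applied to the discretised set $E_{R,k}$, and then transfer the resulting $\beta$-sum back to $E$ via Lemma \ref{lemma:azzamschul}. (I interpret the sum defining $\beta_m(R)$ as involving $\beta_E^{d,2}(Q)^2$, the natural object produced by the $L^2$-type TST.)

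\textbf{PBP for $E_{R,k}$.} I would first verify that $E_{R,k}$ inherits $d$-PBP from $E$ with constants comparable to $(\delta,\ve)$ up to factors depending only on $d$, by the same argument as Lemma \ref{l:approx-BPMD}: for a ball of radius $r \gtrsim c\,2^{-k}$ centred on $E_{R,k}$ find a nearby point of $E \cap R$ within distance $O(c\,2^{-k})$ and apply PBP of $E$, using that $\pi_V$ is $1$-Lipschitz and that Hausdorff content and measure are comparable on subsets of a $d$-plane; for smaller balls, the $d$-skeletons contained in $E_{R,k}$ furnish PBP automatically. By Lemma \ref{l:low-reg-E}, $E_{R,k}$ is then lower content $d$-regular with constant $\sim \delta$.

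\textbf{TST on $E_{R,k}$ and transfer to $E$.} Applying Proposition \ref{l:main-lemma} in the Christ--David system of $E_{R,k}$, with top cube $\widetilde R$ of scale $\ell(\widetilde R) \approx \ell(R)$ whose ball covers $E_{R,k} \cap CB_R$, yields
\[
\ell(\widetilde R)^d + \sum_{Q' \subset \widetilde R} \beta_{E_{R,k}}^{d,2}(3B_{Q'})^2 \ell(Q')^d \;\lesssim\; \gamma(\ve,\delta)^{-1} \hd(E_{R,k}).
\]
For $Q \in \dD_m(R)$ with $N_0 \leq m \leq k$, some $I \in \Delta_{k,c}(R)$ meets $Q$; provided $c$ is fixed small enough in terms of $\rho, n, d$ so that $c\,2^{-k} \ll \ell(Q)$ uniformly for $m \leq k$, there is a cube $Q'$ in the Christ--David system of $E_{R,k}$ with $\ell(Q') \approx \ell(Q)$ and $B_Q \subset 2B_{Q'}$. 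Lemma \ref{lemma:azzamschul} applied with $E_1 = E$, $E_2 = E_{R,k}$ (both lower content $d$-regular with constants $\sim\delta$) gives
\[
\beta_E^{d,2}(3B_Q)^2 \;\lesssim\; \beta_{E_{R,k}}^{d,2}(6B_{Q'})^2 + \frac{1}{\ell(Q)^d}\int_{E \cap 6B_Q}\Big(\frac{\dist(y, E_{R,k})}{\ell(Q)}\Big)^2 d\hdc(y).
\]
Since $\dist(y, E_{R,k}) \leq \sqrt{n}\,c\,2^{-k}$ for every $y \in E \cap R$, the error terms sum geometrically over the scales $N_0 \leq m \leq k$ to at most a constant multiple of $\ell(R)^d \lesssim \gamma^{-1}\hd(E_{R,k})$. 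Combining this with the displayed TST bound for $E_{R,k}$ yields \eqref{e:BJ-a}; the $\ell(R)^d$ term is absorbed into the $\ell(\widetilde R)^d$ contribution from Proposition \ref{l:main-lemma}.

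The main obstacle will be the transfer step: the error produced by Lemma \ref{lemma:azzamschul} is largest at the finest scale $m = k$, so the constant $c = c(\rho, n, d)$ must be fixed (independently of $k$) with enough room that the cumulative error is controlled uniformly across all scales. This is precisely why $c$ in $\Delta_{k,c}(R)$ is allowed to depend on $\rho$.
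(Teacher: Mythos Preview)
Your proposal is correct and follows essentially the same route as the paper: establish PBP for $E_{R,k}$ via the argument of Lemma~\ref{l:approx-BPMD}, apply Proposition~\ref{l:main-lemma} (equivalently \eqref{e:upper-bound}) to $E_{R,k}$, and transfer the $\beta$-sum back to $E$ using Lemma~\ref{lemma:azzamschul}, bounding the error term by $\ell(R)^d$. The paper phrases the error estimate as ``a calculation similar to that from \eqref{e:estBeta1}'' rather than invoking the uniform bound $\dist(y,E_{R,k})\lesssim c\,2^{-k}$ directly, but the content is the same; your remark about fixing $c=c(\rho,n,d)$ small enough is exactly the point the paper makes when it writes ``if we choose $c$ sufficiently small''.
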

\begin{proof}
As it was done in Lemma \ref{l:approx-BPMD}, one might show that if $E$ has $d$-PBP with constants $\ve, \delta$, then so does $E_{R, k}$, with constants $\ve, \delta'>0$, where $\delta'$ is equal to $\delta$ up to a multiplicative constant depending only on $d$. Then, we apply Theorem \ref{t:main} (or, more specifically, \eqref{e:upper-bound}), and Lemma \ref{l:low-reg-E}, to obtain
\begin{align*}
  \ell(R)^d + \sum_{Q \in \dD_{E_{R,k}}} \beta_{E_{R, k}}^{d,2}(Q)^2 \ell(P)^d \lesssim_{n, d} C(\ve, \delta) \hd(E_{R, k}). 
\end{align*}
where $C(\ve, \delta)$ is as in \eqref{e:upper-bound}. The claim \eqref{e:BJ-a} now follows quickly: consider a cube $Q \in \dD_{E}$, such that $\ell(Q) > c 2^{-k}$, for $c<1$ as in \eqref{e:Delta-ck}. If we choose $c$ sufficiently small, we can apply Lemma \ref{lemma:azzamschul} with $E_1=E$ and $E_2 = E_{R,k}$, to obtain
\begin{align*}
    \beta_{E}^{d,p}(C_0P) \lesssim \beta^{d,p}_{E_{R,k}}(6P) + \ps{ \frac{1}{\ell(P)^d} \int_{6 B_P} \ps{\frac{\dist(y, E_{R,k})}{\ell(P)}}^p \, d \mathcal{H}_\infty^d}^{\frac{1}{p}}.
\end{align*}
Thus we see that
\begin{align}
    & \sum_{\substack{P \in \dD_{E}\\ \ell(P) > c 2^{-k}}} \beta_{E}^{d,p} (3 P) \notag \\
    & \lesssim \sum_{\substack{P \in \dD_{E_{R, k}} \\ \ell(P) \gtrsim c 2^{-k}}} \beta^{d,p}_{E_{R,k}}(6 P) + \sum_{\substack{P \in \dD_{E} \\ \ell(P) > c 2^{-k}}} \ps{ \frac{1}{\ell(P)^d} \int_{6 B_P} \ps{\frac{ \dist(y, E_{R,k})}{\ell(P)}}^{p} \, \hdc(y) }^{\frac{1}{p}}.\label{e:form101}
\end{align}
With a calculation similar to that from \eqref{e:estBeta1}, we obtain that the second sum above is $ \lesssim \ell(R)^d$. This then gives 
\begin{align*}
    2 \hd(E_{R, k}) 
    & \gtrsim \ell(R)^d + \frac{1}{C(\ve, \delta)} \left( \ell(R)^d + \sum_{P \in \dD_{E_{R,k}}} \beta_{E_{R, k}}^{d,p}(C_0 P)^2 \ell(P)^d \right) \\
    &  \gtrsim \frac{1}{C(\ve, \delta)} \Bigg[ \ell(R)^d + \sum_{P \in \dD_{E_{R,k}} \atop \ell(P') \geq c 2^{-k}} \beta_{E_{R,k}}^{d,p}(C_0 P)^2 \ell(P)^d \\
    & \quad \quad + \sum_{P \in \dD_{E} \atop \ell(P) > c 2^{-k}} \ps{ \frac{1}{\ell(P)^d} \int_{2C_0 B_P} \ps{\frac{ \dist(y, E_{R,k})}{\ell(P)}}^{p} \, \hdc(y)}^{\frac{1}{p}} \Bigg]\\
    & \stackrel{\eqref{e:form101}}{\gtrsim} \frac{1}{C(\ve, \delta)} \left( \ell(R)^d + \sum_{P \in \dD_{E} \atop \ell(P) \gtrsim c 2^{-k}} \beta_{E}^{d,p}(C_0P)^2 \ell(P)^d \right). 
\end{align*}
This proves \eqref{e:BJ-a} with $\gamma(\ve, \delta)\sim  C(\ve, \delta)^{-1}$.
\end{proof}

\begin{claim}
Let $N$ an integer so that $N > N_0$ (recall that $N_0$ is the scale of $R$, i.e. $R \in \dD_{N_0}(E)$). Consider a dyadic cube $I_N \in \Delta_N(\R^n)$ for which $\ell(I_N) < \ell(R)/10$ and such that $\frac{1}{3} I_N \cap E \neq \emptyset$. For $k > N$, we have
\begin{equation}\label{e:form102}
    \sum_{m=N}^k \sum_{Q \in \dD_m(R) \atop Q \cap I_N \neq \emptyset} \beta_{E}^{d,2}(3 Q)^2 \ell(Q)^2  \geq C_1 C(\delta) (k-N) \beta_0^2 2^{-dN},
\end{equation}
where $C_1$ is a constant depending only on $n, d$ and maybe on $c$ from \eqref{e:Delta-ck}, but not on $\delta\sim \cc_1$. On the other hand, $C(\delta) \to 0 $ as $\delta \to 0$. 
\end{claim}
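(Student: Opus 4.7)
The plan is to show that at every scale $m \in [N,k]$ the inner sum contributes a definite amount of order $\delta\,\beta_0^2\,2^{-dN}$, so that summing in $m$ produces the linear factor $(k-N)$ for free.

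First I would upgrade the uniform non-flatness hypothesis from an $L^1$-type statement to an $L^2$-type statement. Given any Christ--David cube $Q \in \dD_m(R)$, the ball $3B_Q$ is centered on $E$, so \eqref{e:beta-jones} gives $\beta_E^{d,1}(3B_Q) > \beta_0$. Applying Cauchy--Schwarz to the Choquet integral defining $\beta_E^{d,1}$ and using the trivial upper bound $\hdc(E \cap 3B_Q) \lesssim_{n,d} \ell(Q)^d$, I would obtain
\[
\beta_0 < \beta_E^{d,1}(3B_Q) \lesssim_{n,d} \beta_E^{d,2}(3B_Q),
\]
and hence $\beta_E^{d,2}(3B_Q)^2 \gtrsim \beta_0^2$ with a constant depending only on $n,d$.

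The second step is the covering count at each fixed scale $m \in [N,k]$. The family $\{Q \in \dD_m(R) : Q \cap I_N \neq \emptyset\}$ is pairwise disjoint on $E$ (Theorem \ref{theorem:christ}(2)) and covers $E \cap I_N \cap R$. Using $\hdc(E \cap Q) \leq \hdc(E \cap B_Q) \lesssim_{n,d} \ell(Q)^d$ and subadditivity of Hausdorff content, this yields
\[
\sum_{\substack{Q \in \dD_m(R)\\ Q \cap I_N \neq \emptyset}} \ell(Q)^d \gtrsim_{n,d} \hdc(E \cap I_N \cap R).
\]
To lower-bound the right-hand side, I would pick $y \in \tfrac{1}{3} I_N \cap E$ (which exists by hypothesis); the condition $\ell(I_N) < \ell(R)/10$ guarantees, up to a harmless geometric constant, that a ball $B(y, c\ell(I_N))$ with $c$ absolute is contained in $I_N \cap R$. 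Lemma \ref{l:low-reg-E} then gives $\hdc(E \cap B(y,c\ell(I_N))) \gtrsim \delta\,\ell(I_N)^d = \delta\,2^{-dN}$, i.e.\ $\hdc(E \cap I_N \cap R) \gtrsim \delta\,2^{-dN}$.

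Combining the two steps, at each scale $m \in [N,k]$ one has
\[
\sum_{\substack{Q \in \dD_m(R)\\ Q \cap I_N \neq \emptyset}} \beta_E^{d,2}(3B_Q)^2 \ell(Q)^d \gtrsim \beta_0^2 \cdot \delta \cdot 2^{-dN},
\]
and summing over the $k-N+1$ values of $m$ gives \eqref{e:form102} with $C(\delta) \sim \delta$, which indeed tends to $0$ as $\delta \to 0$. I expect the main technical obstacle to be the bookkeeping needed to ensure that $y$ can be chosen inside $R$ (not merely in $E$), but this is a purely geometric point handled by the hypothesis $\ell(I_N) < \ell(R)/10$, since any ball of radius $\sim \ell(I_N)$ centered on $\tfrac13 I_N$ fits comfortably inside the ball $B(x_R, \ell(R))$ containing $R$. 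All the $\delta$-dependence is localised in one application of lower content regularity, which is what produces the clean linear dependence $C(\delta) \sim \delta$.
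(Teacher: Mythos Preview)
Your argument is correct and follows essentially the same two-step route as the paper: (i) uniform non-flatness gives $\beta_E^{d,2}(3B_Q)^2 \gtrsim_{n,d} \beta_0^2$ for every cube $Q$, and (ii) lower content $d$-regularity forces $\sum_{Q\in\dD_m(R),\,Q\cap I_N\neq\emptyset}\ell(Q)^d \gtrsim C(\delta)\,2^{-dN}$ at each scale $m$, after which summing in $m$ yields the factor $(k-N)$. The paper phrases step (ii) as a cube count (there are $\gtrsim C(\delta)2^{d(m-N)}$ standard dyadic cubes $J\subset I_N$ meeting $E$, and these are in bijection with the Christ--David cubes up to constants), whereas you short-circuit that by using subadditivity of $\hdc$ directly; your version is slightly cleaner and makes the dependence $C(\delta)\sim\delta$ explicit.

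Two small points. First, ``Cauchy--Schwarz on the Choquet integral'' is not quite the right justification, since Choquet integration is not additive; the inequality $\beta_E^{d,1}\lesssim_{n,d}\beta_E^{d,2}$ is instead the Jensen-type inequality recorded in Lemma~\ref{l:jensen} (combined with the trivial bound $\hdc(E\cap 3B_Q)\lesssim\ell(Q)^d$). Second, your handling of the bookkeeping issue is not correct as written: knowing that $B(y,c\ell(I_N))$ fits inside $B(x_R,\ell(R))$ does \emph{not} place it inside $R$, and nothing in the stated hypotheses even guarantees that $I_N$ is near $R$. The paper's proof glosses over the same point. What makes everything work is that in the recursive application the cubes $I_N$ are always chosen from $\Delta_{N,c}(R)$, i.e.\ they already meet $R$; once that (tacit) hypothesis is in force, both your argument and the paper's go through.
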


\begin{proof}
To see this, note first that by lower $(d, \cc_1)$-regularity of $E$ (with $\cc_1 \sim_d \delta)$), there are  $\gtrsim C(\delta) 2^{d(m-N)}$ dyadic cubes $J$ of generation $m$ (with $m>N$) such that $J \subset I_N$ and $J \cap E \neq \emptyset$. Clearly, $C(\delta)\to0$ as $\delta\to 0$. Hence since $E$ is uniformly non-flat, we see that if $N \leq m \leq k$,  
\begin{align*}
     \sum_{Q \in \dD_m(R) \atop Q \cap I_N \neq \emptyset} \beta_{E}(3 Q)^2 \ell(Q)^d 
    & \geq  \beta_0^2 \sum_{Q \in \dD_m(R) \atop Q \cap I_N} \ell(Q)^d \\
    & \approx_c \beta_0^2 \sum_{J \in \Delta_{m,c}(R)\atop J \subset I_N} \ell( J)^d 
     \gtrsim  C(\delta) \beta_0^2 2^{d(m-N)} 2^{-dm}  \approx_{n,d, c} C(\delta) \beta_0^2 2^{-dN}.
\end{align*}
This gives \eqref{e:form102}. 
\end{proof}

Now, using \eqref{e:BJ-a}, we immediately obtain
\begin{align*}
    \hd(E_{R,k}\cap I_N) \gtrsim \gamma(\ve, \delta) (k-N) \, \beta_0^2 \, 2^{-dN},
\end{align*}
where we incorporated $C(\delta)$ into $\gamma(\ve, \delta)$. 
Let now $\ck{z_j}$, $j$ in some index set $A$, be a maximal $2^{-k}$-separated net of $E_{R,k}\cap I_N$ such that 
$ 
    \bigcup_{j \in A} B(z_j, 2^{-k+2}) \supset E_{R,k} \cap I_N.
$
Then
\begin{align*}
    \hd(E_{R,k} \cap I_N) \lesssim 2^{-dk}\, \text{Card}(A).
\end{align*}
Thus we obtain 
$
     2^{-dk} \, \text{Card}(A) \gtrsim \gamma(\ve, \delta) (k-N) \, \beta_0^2 2^{-dN}, 
$
and therefore
\begin{align} \label{e:CardA}
    \text{Card}(A) \geq C_1 \gamma(\ve, \delta) (k-N) \, \beta_0^2\, 2^{d(k-N)}.
\end{align}
Since $k$ was an arbitrary integer with $k \geq N$, we can choose it so that
\begin{align*}
 \kappa:= k-N \approx C_1^{-1} \gamma(\ve, \delta)^{-1} \beta_0^{-2}.
\end{align*}
In particular, with $c = C_1/2$,
\begin{equation*}
\kappa C_1 \gamma(\ve, \delta) \beta_0^2 \geq 2^{\kappa\,  c \gamma(\ve, \delta) \beta_0^2} 
\end{equation*}
Hence we see from \eqref{e:CardA} that
\begin{align} \label{e:CardA-b}
    \text{Card}(A) \geq 2^{(d+C_1'\gamma(\ve, \delta)\beta_0^2)\kappa},
\end{align}
\noindent
We now apply this construction recursively for each $N > N_0$, as follows. For $N_0$, we put 
\begin{align*}
    \dS_0 := \ck{ I \in \Delta_{N_0+\kappa}(R) \, |\, \exists j \in A \mbox{ s.t. } z_j \in I}
\end{align*}
Then for each  $I \in \dS_0$, we find a maximal net $\{z_j\}_{j \in A}$ as above; the cardinality of this net will be again as in \eqref{e:CardA-b}. We put the relative cubes in the subfamily
\begin{align*}
    \dS(I):= \ck{ J \in \Delta_{N_0 + 2\kappa} \, |\, \exists j \in A \mbox{ s.t. }z_j \in J}.
\end{align*}
We then put
$
    \dS_1:= \bigcup_{I \in \dS_0} \dS(I).
$
Having defined $\dS_{j-1}$, we set
$
    \dS_{j} := \bigcup_{I \in \dS_{j-1}} \dS(I), 
$
where $\dS(I)=\{ j \in \Delta_{N_0 + j\kappa} \, |\, \exists j \in A \mbox{ s.t. } z_j \in J\}$.
Let us record that for each $j \in \N$, we have 
\begin{enumerate}
    \item Each $J \in \dS_j$, is a subset of some $I \in \dS_{j-1}$.
    \item Each $I \in \dS_{j-1}$ contains at least $2^{(d+ c \gamma(\ve, \delta) \beta_0^2) \kappa}$ cubes $I \in \dS_{j}$ (as in \eqref{e:CardA}).
    \item For each $j \in \N$, if $I \in \dS_j$, we have $I\cap R \neq \emptyset$.
\end{enumerate}

\noindent
\begin{lemma}
If $R$ satisfies (1)-(3), then 
\begin{align*}
    {\rm dim}_H(R) > d+ c \gamma(\ve, \delta)\beta_0^2.
\end{align*}
\end{lemma}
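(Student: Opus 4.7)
The plan is to run a Frostman/mass distribution argument on a Cantor-type set naturally carved out by the nested families $(\dS_j)$. Set
\[
K := \bigcap_{j \geq 0}\ \bigcup_{I \in \dS_j} \overline{I},
\]
which is a nonempty compact set by (1) (a decreasing intersection of nonempty compacta). I would first show $K \subset R$: pick $x \in K$ and a descending chain $I_0 \supset I_1 \supset \cdots$ with $I_j \in \dS_j$ and $x \in \overline{I_j}$; by (3) choose $p_j \in I_j \cap R$. Since $\diam(I_j) \lesssim 2^{-(N_0 + j\kappa)} \to 0$, the sequence $(p_j)$ is Cauchy with limit $x$, and closedness of $R \subset \R^n$ (standard for Christ-David cubes of a closed set) forces $x \in R$.

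Next I would distribute a probability measure $\mu$ on $K$ in the usual top-down way: assign mass $1/\#\dS_0$ to each $I \in \dS_0$, and then recursively $\mu(J) = \mu(I)/\#\dS(I)$ whenever $J \in \dS(I) \subset \dS_j$ has parent $I \in \dS_{j-1}$. Condition (2) forces $\#\dS(I) \geq M := 2^{(d + c\gamma(\ve,\delta)\beta_0^2)\kappa}$, and so $\mu(I) \leq M^{-j}$ for every $I \in \dS_j$.

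The crucial point is a Frostman bound on $\mu$-masses of small balls. For $x \in K$ and $0 < r < \ell(R)$, choose $j$ so that $2^{-(N_0 + (j+1)\kappa)} \leq r < 2^{-(N_0 + j\kappa)}$. Since the cubes in $\dS_j$ are essentially disjoint ambient dyadic cubes of sidelength comparable to $r$ (up to a factor depending on the constant $c$ from \eqref{e:Delta-ck}), the ball $B(x,r)$ meets only $\lesssim_n 1$ of them. Hence
\[
\mu(B(x,r)) \lesssim_n M^{-j} = 2^{-(d + c\gamma(\ve,\delta)\beta_0^2)\, j\kappa} \lesssim_{N_0,\,c} r^{d + c\gamma(\ve,\delta)\beta_0^2}.
\]
By the mass distribution principle (see e.g.\ \cite{mattila}) this gives $\dim_H(K) \geq d + c\gamma(\ve,\delta)\beta_0^2$, and since $K \subset R$ the same lower bound transfers to $\dim_H(R)$. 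The strict inequality stated in the lemma is then obtained by replacing $c$ in the construction with any slightly smaller $c' < c$, which the exponent can absorb with room to spare.

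The main obstacle, as I see it, is the set-theoretic verification $K \subset R$: the dyadic cubes $I \in \dS_j$ only \emph{intersect} $R$, so one has to exploit the shrinking diameters together with closedness of $R$ to trap the limit points inside $R$ itself rather than merely in a neighborhood. The remaining ingredients — counting essentially disjoint same-generation dyadic cubes inside a ball, and converting the multiplicative factor $M$ per scale into an exponent in $r$ — are routine once this is in place.
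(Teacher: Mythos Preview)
Your approach is essentially the same as the paper's: both build a probability measure on the Cantor-type set $\bigcap_j \bigcup_{I\in\dS_j}\overline{I}$ via the nested structure (1)--(2), obtain the decay $\mu(I)\le 2^{-j\kappa(d+c\gamma\beta_0^2)}$, and conclude by the mass distribution principle/Frostman's lemma; the paper's proof is simply terser and skips the bookkeeping you spell out. One minor caveat: Christ--David cubes are not in general closed, so your argument yields $K\subset\overline{R}$ rather than $K\subset R$; since $\overline{R}\subset E$ and the statement of Corollary~\ref{t:corollary} concerns $\dim_H(E)$ anyway (and the boundaries of Christ--David cubes are negligible), this is harmless.
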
 
\begin{proof}
To prove this claim, we define the $\mu$ on the elements $I$ of $\dS_j$, for $j \geq 0$, by 
\begin{align*}
    \mu(I) = \text{Card}(A)^{-j} \leq 2^{-j\kappa(d+c\gamma(\ve, \delta)\beta_0^2)}.
\end{align*}
One can then check that $\spt(\mu)= E$ and that $\mu(R) = 1$. Then, by Frostman's Lemma (Theorem 8.8 in \cite{mattila}), we have that
\begin{align*}
    \mathcal{H}^{d+ c \gamma(\ve, \delta)\beta_0^2}(R) >0.
\end{align*}
\end{proof}
This completes the proof of Corollary \ref{t:corollary}.

\section{Proof of Corollary \ref{t:corollary-2}}\label{s:cap}
Let $\Sigma \subset B_0\subset \R^{d+1}$ be a compact set with $d$-PBP with parameters $\delta, \ve>0$, let $E \subset \Sigma$. By Frostman's lemma (see \cite[Remark 1.24]{tolsa-book}), there exists a measure $\mu$ with $\mu(B) \leq C r(B)^d$ supported on $E$ so that $\mu(E) \approx \dH_\infty^d(E)$, were the implicit constant only depends on $d$. Then, using Lemma \ref{l:low-reg-E}, we have, for any ball centered on $\spt(\mu) \subset E \subset \Sigma$
\begin{align*}
	\beta_\mu^{d,2}(B) \lesssim \beta_\Sigma^{d,2}(B).
\end{align*}
Let $\Theta_\mu(B)= \mu(B)/r(B)^d$ be the $d$-density of $\mu$ in the ball $B$; since $\mu(B) \leq r(B)^d$, $\Theta_\mu(B) \leq 1$. By Theorem \ref{t:main}, we then get
\begin{align*}
	\beta^2(\mu, B) &:= \int_0^{r(B)} \int_B \beta_\mu^{d,2}(x,r)^2 \Theta_\mu(B(x,r)) \, d\mu(x) \, \frac{dr}{r} \\
	&  \lesssim \sum_{Q \in \dD_\mu \atop B_Q \subset 3B} \beta_\mu^{2,d}(B_Q)^2 \Theta_\mu(B_Q) \mu(Q) \\
	& \lesssim \sum_{Q\in \dD_\Sigma} \beta_\Sigma^{2, d}(3B_Q)^2 \ell(Q)^d \leq C_1 \gamma(\ve, \delta)^{-1} \hd(\Sigma \cap 3B).
\end{align*}
Here $C_1$ allowed to depend only on $d$.
This in particular holds for $B=B_0$. Set
\begin{equation*}
	C_2:= \left( \frac{\hd(E)}{C_1 \gamma(\ve, \delta)^{-1} \hd(\Sigma)} \right)^{\frac{1}{2}}.
\end{equation*}	
Then define the measure $\sigma := C_2 \mu$. This has clearly growth $\lesssim r^d$. We also have 
\begin{equation*}
	\beta^2(\sigma, B_0) \leq C_2^3 \beta^2(\mu, B_0)  \leq  \left( \frac{\hd(E)}{C_1 \gamma(\ve, \delta)^{-1} \hd(\Sigma)} \right)^{\frac{3}{2}} C_1 \gamma(\ve, \delta)^{-1} \hd(\Sigma).
\end{equation*}
Then, by \cite[Corollary 1.4]{tolsa2021measures}, 
\begin{align*}
	&\kappa (E) \geq \sigma(E) = C_2 \mu(E) = \left( \frac{\mu(E)}{C_1 \gamma(\ve, \delta)^{-1} \hd(\Sigma)} \right)^{\frac{1}{2}} \mu(E) \\
	& \qquad \qquad \qquad \qquad \qquad = C \gamma(\ve, \delta)^{\frac{1}{2}} \frac{\mu(E)^{\frac{3}{2}}}{\hd(\Sigma)^{\frac{1}{2}}} \approx \gamma(\ve, \delta)^{\frac{1}{2}} \frac{\hdc(E)^{\frac{3}{2}}}{\hd(\Sigma)^{\frac{1}{2}}}
\end{align*}

\bibliography{bibliography}
\bibliographystyle{halpha-abbrv}
\end{document}